\documentclass[a4paper,11pt]{amsart} %

\usepackage{geometry}
\usepackage{amsmath, amsthm}
\usepackage{amssymb, mathtools}
\usepackage{extarrows}

\usepackage[dvipsnames]{xcolor}

\usepackage{bigfoot}

\usepackage[shortlabels]{enumitem}

\usepackage{todonotes}

\usepackage{hyperref}

\usepackage{tikz-cd}

\usepackage{endnotes}
\usepackage{booktabs}
\usepackage{tabulary}

\usepackage{soul}
\usepackage{scalerel}

\usepackage[utf8]{inputenc}
\usepackage[greek,english]{babel}
\usepackage{lmodern}

\newtheorem{THEOREM}{Theorem}[section]

\newtheorem{Conclusion}[THEOREM]{Conclusion}

\newtheorem{Theorem}[THEOREM]{Theorem}
\newenvironment{theorem}{\begin{Theorem}}{\end{Theorem}}

\newtheorem{Lemma}[THEOREM]{Lemma}
\newenvironment{lemma}{\vskip6pt\begin{Lemma}}{\end{Lemma}\vskip6pt}

\newtheorem{Proposition}[THEOREM]{Proposition}
\newenvironment{proposition}{\vskip12pt\begin{Proposition}}{\end{Proposition}\vskip6pt}

{\newtheoremstyle{definition}
  {12pt}
  {0pt}
  {\upshape\color{black}}
  {}
  {\bfseries\color{black}}
  {.}
  {.5em}
  {}
\theoremstyle{definition}

\newtheorem{notation}[THEOREM]{Notation}
\newtheorem{definition}[THEOREM]{Definition}

\newtheorem{remark}[THEOREM]{Remark}

\newtheorem*{note-nono}{Note}
\newtheorem{note-no}{Note}

}

\newtheorem{Claim}[THEOREM]{Claim}

\newtheorem{Subclaim}[THEOREM]{Subclaim}

\newtheorem{Corollary}[THEOREM]{Corollary}
\newenvironment{corollary}{\begin{Corollary}}{\end{Corollary}}

\newtheoremstyle{myrmkstyle}
  {12pt}
  {3pt}
  {\upshape\color{blue}}
  {}
  {\bfseries\color{blue}}
  {.}
  {.5em}
  {}

  \theoremstyle{myrmkstyle}

\newtheoremstyle{myrmkstyleblk}
  {12pt}
  {0pt}
  {\upshape\color{black}}
  {}
  {\bfseries\color{black}}
  {.}
  {.5em}
  {}

  \theoremstyle{myrmkstyleblk}

\newtheoremstyle{privatermkstyle}
  {12pt}
  {3pt}
  {\upshape\color{blue}}
  {}
  {\bfseries\color{blue}}
  {}
  {0em}
  {}

  \theoremstyle{privatermkstyle}

\newtheoremstyle{privatermkstylered}
  {3pt}
  {3pt}
  {\upshape\color{red}}
  {}
  {\bfseries\color{red}}
  {}
  {0em}
  {}

  \theoremstyle{privatermkstylered}

\newenvironment{eqn}{\begin{equation*}}{\end{equation*}}
\newenvironment{itmz}{\begin{itemize}[itemsep=6pt,topsep=-3pt,leftmargin=0.15in]}{\end{itemize}}

\def\>{\rangle}
\def\<{\langle}

\newcommand{\rge}{\mathop{\mathrm{rge}}} 
\newcommand{\dom}{\mathop{\mathrm{dom}}} 
\newcommand{\Var}{\mathop{\mathrm{Var}}} 
\newcommand{\FV}{\mathop{\mathrm{FV}}} 
\renewcommand{\lim}{\mathop{\mathrm{lim}}} 
\renewcommand{\ni}{\notin}
\renewcommand{\to}{\mathop{\parbox{.5cm}{\rightarrowfill}}}

\newcommand{\lh}{\mathop{\mathrm{lh}}}
\newcommand{\On}{\mathop{\mathrm{On}}}
\newcommand{\Card}{\mathop{\mathrm{Card}}}

\newcommand{\concat}{\kern-.25pt\raise4pt\hbox{$\frown$}\kern-.25pt}
\newcommand{\on}{\upharpoonright}  

\newcommand{\Nbb}{\mathbb{N}}

\newcommand{\pret}{\sss\mathrel{\triangleleft}\sss}

\newcommand{\isom}{\cong}
\newcommand{\thinks}{\models}

\newcommand{\Vee}{\bigvee}
\newcommand{\Wedge}{\bigwedge}

\newcommand{\UA}{{\mathcal U}{\mathcal A}}
\def\sss{\hskip2pt}

\def\ssss{\hskip1pt}
\def\tupof#1#2{\<\ssss #1\sss:\sss #2\ssss\>} 
\def\tup#1{\<\ssss #1\ssss\>} 
\def\setof#1#2{\{\ssss #1\sss:\sss #2\ssss\}} 
\def\set#1{\{\ssss #1\ssss\}} 

\newcommand{\cardrule}{\hrule height.2pt}
\newcommand{\cardrulefill}{\cleaders\cardrule\hfill}
\newcommand{\cardchar}[1]{\vbox{\ialign{##\crcr
    \cardrulefill\crcr\noalign{\kern1pt\nointerlineskip}
    $\hfil\displaystyle{#1}\hfil$\crcr}}}
\newcommand{\barchar}[1]{\vbox{\ialign{##\crcr
    \cardrulefill\crcr\noalign{\kern2.5pt\nointerlineskip}
    $\hfil\displaystyle{#1}\hfil$\crcr}}}
\def\card#1{\cardchar{\cardchar{#1}}}
\newcommand{\obar}[1]{\barchar{#1}}

\newcommand{\forkindep}[2][]{%
  \mathrel{
    \mathop{
      \vcenter{
        \hbox{\oalign{\noalign{\kern-.3ex}\hfil$\vert$\hfil\cr
              \noalign{\kern-.7ex}
              $\smile$\cr\noalign{\kern-.3ex}}}
      }
    }\displaylimits^{#2}_{#1}
  }
}

\title{Scott-Karp analysis without sentences}

\author{Andreas Brunner, Charles Morgan and Darllan Concie\c{c}\~ao Pinto}

\address{Departamento de Matem\'atica, Universidade Federal de Bahia,
  Avenida Milton Santos s/n, S/N, Ondina, Salvador, Bahia, Brazil. CEP: 40170-110.}
\email{andreas@dcc.ufba.br}
\email{charlesmorgan@ufba.br}
\email{darllan@ufba.br}

\begin{document}

\begin{abstract} Scott and Karp gave an analysis which provides a level-by-level equivalence between
   global similarity between two structures and local commonality in terms of sharing particular invariants.
  Scott and Karp's local invariants were certain infinitary formulae. 

  We give a more abstract version of the local side of Scott-Karp analysis which avoids the use of infinitary languages.
  We show the resulting hierarchies are still provide desired equivalences in the classical setting.

  Moreover, the abstract nature of our analysis, as we show, 
  makes it suitable to provide local level-by-level equivalents to Hjorth's much more general version of global similarity
  in the context of topological group actions on topological groups.

  We, furthermore, provide, analogously to the classical work of Ehrenfeucht and Fra\"iss\'e, novel game theoretic equivalents
  for Hjorth's global similarity relations and some natural variants.
\end{abstract}

\maketitle

\section*{Introduction}\label{intro}

Cantor celebratedly showed (\cite{Cantor}) any two countable dense linear orders without
endpoints are isomorphic. The \emph{back and forth} method appears\footnote{See \cite{Plotkin},
\cite{Silver}.}  to have been introduced independently by Huntingdon
(\cite{Huntingdon}) and Hausdorff (\cite{Hausdorff07},
\cite{Hausdorff}) to give proofs of this result.

We sketch such a proof in order to give
the reader a taste of the method.
Take two such orders, $\mathcal O_0$ and $\mathcal O_1$, and enumerate
each in order type $\omega$.  Build a sequence of partial isomorpisms
between them by induction.  Start with the empty function. For the
inductive step, suppose the partial isomorphism $f$ has been built so
far.  Take the first element, say $x$, in the enumeration of $\mathcal
O_0$ not in $\dom(f)$. Find an element $y$ of $\mathcal O_1$ which
lies in the same position in $\mathcal O_1$ with respect to the
elements of $\rge(f)$ in $\mathcal O_1$ as $x$ does to the elements of
$\dom(f)$ in $\mathcal O_1$ and set $g = f\cup\set{(x,y)}$. (This is
  the ``forth'' manoeuvre.) Then, take the first element, $z$, in the
  enumeration of $\mathcal O_1$ not in $\rge(g)$.  Find an element $w$
  of $\mathcal O_0$ which lies in the same position in $\mathcal O_0$
  with respect to the elements of $\dom(g)$ in $\mathcal O_0$ as $z$
  does to the elements of $\rge(g)$ in $\mathcal O_1$ and set $h =
  g\cup\set{(w,z)}$. (This is the ``back'' manoeuvre.)  Taking the
    union of the chain of partial isomorphisms generated in this way gives an order
    preserving bijection as required.

As employed in this way to prove isomorphism between two structures, the method
has proven very fruitful in model theory. Poizat (\cite{Poizat-en},\cite{Poizat-fr}), Chapter 6, is 
a rich source of introductory examples, from the theories of algebraically, differentially or real closed fields, Boolean algebras,
ultrametric spaces and modules,\footnote{Some material was added to the
original version of this chapter in \cite{Poizat-fr} in the course of its translation to \cite{Poizat-en}.}
and Marker (\cite{Marker}), Theorem (2.4.2), for example,
proves the uniqueness up to isomorphism of the countable random graph.

However, when applied to two non-isomorphic structures (for the same language)
the technique can still supply useful and interesting information. 
One instantiation of this phenomenon is what Marker (\cite{Marker}) refers to as \emph{Scott-Karp analysis}, (manifested in
\cite{Scott} and \cite{Karp}).

Suppose one is interested in comparing two pairs each consisting of a
model and a sequence of elements from the model, say $(M,\bar{a})$ and $(N,\bar{b})$, where $M$, $N$ are $L$-structures
for some language $L$ and $\lh(\bar{a})=\lh(\bar{b})$.

The basic idea of Scott-Karp analysis is to provide two ordinal-indexed hierarchies within each of which one can compare
such pairs. One hierarchy should measure global similarity,
whilst the other should measure the extent to which pairs have specific (single) invariants in common. This type of analysis
is useful if one can prove the two hierarchies match: that particular degrees of global similarity correspond exactly
to particular invariants being in common. If one can do so one can read off global information about a pair from local information.

In Scott and Karp's original analysis the level of global similarity between two
such pairs was determined by whether exactly the same formulae up to a certain degree
of complexity, measured in terms of an ordinal-valued rank derived from the quantifiers used in the formulae,
were satisfied by both of the two pairs. This is very closely related to a hierarchy of
relations defined via the back and forth method, which one should also think of as being concerned with global information
about the pairs. (The rank and a useful variant are defined below.)

The elements of the hierarchy of invariants in Scott and Karp's analysis were single infinitary formulae $\phi^\alpha_{M,\bar{a}}$,
corresponding to a pair $(M,\bar{a})$ and an ordinal $\alpha$. (These formulae are also touched on below, but, as
indicated by the title, do not play a central r\^ole here.)

Karp's theorem, which generalized a theorem of Fra\"iss\'e (\cite{Fra53}) for finite $\alpha$,
is that the two hierarchies do in fact match up.

Much later, Hjorth (\cite{Hjorth}, \cite{Drucker})
gave a version of the first, global, leg of this Scott-Karp analysis,
in the sense of the hierarchy of relations defined via the back and forth method,
in the much more general context of topological group actions on topological spaces.
This should still be understood as paying attention to similarity at the global scale.
We discuss this in more detail in \S{}\ref{Hjorth_Karp_thm}  below.

  In this paper we give a more abstract version of the second, local, leg of Scott-Karp analysis.
  We show this alternative approach still allows one to match up global and local hierarchies.
  This allows us to give alternative proofs in the original context of Scott and Karp's work.

    Specifically, given a signature $\tau$ and $L$ either the first
  order language or the infinitary language $L_{\infty\lambda}$, to
  each $L$-structure $M$ and sequence $\bar{a}$ of elements and
  ordinal $\alpha$ we define functions $F^{\alpha}_{M,\bar{a}}$, and
  $H^{\alpha}_{M,\bar{a}}$, and for each $L$-structure $N$ a function
  $G^{\alpha}_{M,N\bar{a}}$.

  The definition of the $F^{\alpha}_{M,\bar{a}}$ is recursive and so
  somewhat complex to understand directly.  The definition of the
  $H^{\alpha}_{M,\bar{a}}$ is more straightforward, however they are
  class functions.  Nevertheless, we show that given $(M,\bar{a})$,
  $(N,\bar{b})$ and $\alpha$ we have $F^{\alpha}_{M,\bar{a}} =
  F^{\alpha}_{N,\bar{b}}$ if and only if $H^{\alpha}_{M,\bar{a}} =
  H^{\alpha}_{N,\bar{b}}$.  Thus it is a matter of taste whether to
  focus the $F^{\alpha}_{M,\bar{a}}$ or the $H^{\alpha}_{M,\bar{a}}$.

  We then show that $F^{\alpha}_{M,\bar{a}} = F^{\alpha}_{N,\bar{b}}$
  if and only if $(M,\bar{a}) \sim_\alpha (N,\bar{b})$, where the
  latter means there is a back and forth system between $(M,\bar{a})$
  and $(N,\bar{b})$.

  We follow this by defining (infinitary) formulae $\phi^\alpha_{M,\bar{a}}$ and showing
  $N\thinks \phi^{\alpha}_{M,\bar{a}}(\bar{b})$ if and only if $F^{\alpha}_{M,\bar{a}} = F^{\alpha}_{N,\bar{b}}$.
  This allows us to give a version of Karp's theorem, $M\equiv_\alpha N$ if and only if $M\sim_\alpha N$
  if and only if $F^{\alpha}_{M,\emptyset} = F^{\alpha}_{N,\emptyset}$.
  
  However, our approach is also applicable to topological group actions on topological spaces, allowing us
  to prove new results there analogous to those of Scott and Karp classically, where the lack of languages
  vitiates the usual Scott-Karp analysis.

  Since Ehrenfeucht-Fra\"iss\'e games are closely tied to (and many
  authors assert give an alternative account for) the global leg of
  Scott-Karp analysis, it seemed worthwile to also give in the paper
  (\S{}\ref{Hjorth_intro_games}) a game theoretic analysis of Hjorth's
  hierarchy. We give equivalents to three notions of comparability
  of pairs consisting of a point in the space and an open set in the group
  in terms of winning strategies for player II in certain games we define.

  We then show results in this context similar to our earlier first order model theoretic ones,
  for example that for certain functions $H^\alpha_{x,U}$ we have
  $H^\alpha_{x,U} = H^\alpha_{y,V}$ if and only if $(x,U)\sim_\alpha (y,V)$ if and only if (for $\alpha>0$)
  player II has a winning strategy for ${\mathcal G}^s(x,U,y,V,\alpha)$.

\section{Preliminaries}\label{prelims}

\begin{notation} Most of our notation is standard. We use a handful of standard set theoretic conventions. We write
$\On$ for the class of ordinals. Given a set $X$ and an ordinal $\alpha$
we denote by $\card{X}$ the cardinality of $X$ and $^{\alpha}X$ the set of sequence of length $\alpha$ from $X$, equivalently
the set of functions from $\alpha$ to $X$, while $^{<\alpha}X = \bigcup_{\beta<\alpha} {^{\beta}X}$. 

We follow the typical model theoretic convention and denote concatenation by juxtaposition.
If $\bar{a}$, $\bar{b}$ are tuples from a structure $M$ and $c\in M$,
  we write $\bar{a}\bar{b}$ for the concatenation $\bar{a}\concat\bar{b}$,
  and $\bar{a}c$ for the concatenation $\bar{a}\concat c$. More
  explicity, if $\bar{a}=\tupof{a_i}{i<\alpha}\in {^{\alpha}M}$,
  $\bar{b}=\tupof{b_j}{j<\beta}\in {^{\beta}M}$ and $c\in M$, write
  $\bar{a}\bar{b}$ for the sequence $\bar{d}=\tupof{d_i}{i<\alpha +
    \beta}$, where for $i<\alpha$ we have $d_i=a_i$ and for $i =
  \alpha +j$ with $j<\beta$ we have $d_i = b_j$, and write $\bar{a}c$
  for the sequence $\bar{e}=\tupof{e_i}{i<\alpha + 1}$ where for
  $i<\alpha$ we have $e_i=a_i$ and $e_\alpha = c$.
\end{notation}
  
\begin{definition}A \emph{signature} is a collection of function, relation and constant symbols. More formally,
  $\tau$ is a signature if it is a quadruple $\tup{\mathcal F,\mathcal R,\mathcal C,\tau'}$, where
  $\mathcal F$, $\mathcal R$ and $\mathcal C$ are pairwise disjoint sets and $\tau':\mathcal F\cup\mathcal R\longrightarrow \Nbb$.
\end{definition}

Rothmaler \cite{Rothmaler}, for example, gives an explicit account of the construction of a first order language over
a signature.

Fix for the remainder of the paper  a signature $\tau$, $=\tup{\mathcal F,\mathcal R,\mathcal C,\tau'}$.

We need a concrete definition of infinitary languages.

 We adopt (a mild adaptation of)
V\"a\"an\"anen's definition (\cite{Vaananen}), Definition (9.12) of
the infinitary language $L^{\mu}_{\kappa\lambda}$.  (\cite{Vaananen} does
\emph{not} specify that $\lambda$ should be regular.)

\begin{definition}\label{defn_L^mu_kappa_lambda}
  Let $\mu$, $\lambda$ and $\kappa$ be cardinals.
  The set $L^{\mu}_{\kappa\lambda}$ is defined by
  \begin{enumerate}
  \item For $\alpha<\lambda$ the variable $v_\alpha$  is a term. Each constant symbol $c\in  \mathcal C$ is a term.
    If $f$ is an $n$-ary
    function symbol and $t_0$, \dots, $t_{n-1}$ are terms then so is $f(t_0,\dots,t_{n-1})$.
\item If $t_0$ and $t_1$ are terms, then $t_0=t_1$ is an $L^{\mu}_{\kappa\lambda}$-formula.
\item If $t_0$,\dots, $t_{n-1}$ are terms and $R$ is an $n$-ary relation symbol then\break
   $R(t_0,\dots t_{n-1})$ is an $L^{\mu}_{\kappa\lambda}$-formula.
\item If $\phi$ is an $L^{\mu}_{\kappa\lambda}$-formula, so is $\neg \phi$.
\item If $\Phi$ is a set of $L^{\mu}_{\kappa\lambda}$-formulae of size less than $\kappa$
  with a fixed set $V$ of free variables and $\card{V} < \lambda$,
 $\Wedge\Phi$ is an $L^{\mu}_{\kappa\lambda}$-formula.
\item If $\Phi$ is a set of $L^{\mu}_{\kappa\lambda}$-formulae of size less than $\kappa$
  with a fixed set $V$ of free variables and $\card{V} < \lambda$,
  $\Vee\Phi$ is an $L^{\mu}_{\kappa\lambda}$-formula.
\item If $\phi$ is an $L^{\mu}_{\kappa\lambda}$-formula,
  $V \in [\setof{v_\alpha }{\alpha < \lambda}]^{<mu}$ and $\card{V}<\lambda$,
   $\forall V\sss\phi$ is an $L^{\mu}_{\kappa\lambda}$-formula.
\item If $\phi$ is an $L^{\mu}_{\kappa\lambda}$-formula,
  $V \in [\setof{v_\alpha }{\alpha < \lambda}]^{<\mu}$ and $\card{V}<\lambda$,
  $\exists V\sss\phi$ is an $L^{\mu}_{\kappa\lambda}$-formula.
  \end{enumerate}  
\end{definition}

Thus $\lambda$ bounds the size of the set of free variables of a formula,
$\kappa$ bounds the size of the infintary conjuctions and disjunctions, and
$\mu$ bounds the size of the sets of variables over which one may quantify.

\begin{definition}\label{defn_L_infty_lambda} For $\mu$, $\lambda$ and $\kappa$
  cardinals we set $L_{\kappa\lambda} = L^{\lambda}_{\kappa\lambda}$
  and $\bar{L}_{\kappa\lambda} = L^{2}_{\kappa\lambda}$.

  For $\mu$ and $\lambda$ cardinals we set
  $L^{\mu}_{\infty\lambda} = \bigcup_{\kappa\in \Card} L^{\mu}_{\kappa\lambda}$.
  We also set
  $L_{\infty\lambda} = L^{\lambda}_{\infty\lambda} = \bigcup_{\kappa\in \Card} L_{\kappa\lambda}$
  and
  $\bar{L}_{\infty\lambda} = L^{2}_{\infty\lambda} = \bigcup_{\kappa\in \Card}
  \bar{L}_{\kappa\lambda}$
\end{definition}

So, for example, $\bar{L}_{\omega\omega}$ is first order logic over $\tau$.

  We refer the reader to \cite{Vaananen}, \cite{Dickmann-book} and
  \cite{Dickmann} for more on $L^{\mu}_{\kappa\lambda}$.
  \cite{Vaananen} gives a formal version of this definition of $L_{\infty\lambda}$ as a class
  with a model of ZFC.

  \begin{definition}\label{defn_Var_FV} Let $\phi$ be an $L^{\mu}_{\kappa\lambda}$-formula. Then $\Var(\phi)$ is the set of variables occuring in $\phi$ and
    $\FV(\phi)$ is the set of free variables occuring in $\phi$.
\end{definition}

\section{Equivalence relations for tuples from two structures}\label{equivalence} 

There is a substantial body of work in the literature on sequences of ``back-and-forth'' relations.
See \cite{Harrison-Trainor}, \cite{Marker}, \cite{Vaananen}.  
The elements of the domains of these relations are pairs consisting of a structure and a tuple from it.
The sequences are defined by induction and aim to measure
the degree of similarity amongst the pairs. There is
quite a variety of such sequences of relations. The principal
distinctions are whether the relations in the sequence are symmetric
or asymmetric and whether the `extensions' at successor steps are by
singletons or by tuples -- the extensions in the back-and-forth argument given in the introduction, for example,
are by singletons. Further variations come from different
starting points for the inductive definitions.

Here we work with the version more usual in the analysis of structures and tuples from them
appearing in classical first order model theory, with symmetric relations.

As far as the size of the extensions go, we have our cake and
eat it. We work with languages over the signature $\tau$ in a way that
allows us to treat classical first order logic ($\bar{L}_{\omega\omega}$) and the
infinitary logic $L_{\infty\lambda}$, for $\lambda$ a cardinal,
simultaneously. Our key short-cut is to consider only languages, $L$,
parametrized by cardinals, $\lambda$ and $\mu$, where 
$\lambda$ is a strict upper bound on the size of the conjunctions
and disjunctions allowed in the formation of $L$-formulae and
$\mu$ is a strict upper bound on the size of tuples
additionable in back-and-forth steps.

The only pairs of which we permit consideration are $(2,\omega)$, corresponding to one-point
extensions, and arbitrary pairs of the form $(\lambda,\lambda)$. This
formalism is perhaps a little convoluted, however, it does allow us
to give unified proofs of our results. We stress, again, we are \emph{very} far from allowing arbitrary
cardinals for the values of $(\mu,\lambda)$. 

Accordingly, we now fix for the rest of the paper such a qualifying pair $(\mu,\lambda)$.

\begin{definition}\label{UA_fmlae} Let $\phi \in L$ be atomic. We say $\phi$ is \emph{unnested} if
  it is of one of the forms
  \[ v_i=v_j,\sss\sss v_i = c,\sss\sss R(v_{i_0},\dots,v_{i_{n-1}})\hbox{ or }v_{i_n} = f(v_{i_0},\dots,v_{i_{n-1}}),\] where the
  $v_i$ are variables, $c$ is a constant symbol, $R$ is an $n$-ary relation symbol and $f$ is an $n$-ary function symbol.
  Write ${\UA}$ for the collection of
  unnested atomic formulae. Note if $\phi\in \UA$ then $\Var(\phi) = \FV(\phi)$ and is finite.
\end{definition}

\begin{definition}\label{defn_unnested} Let $\phi \in L$. We say $\phi$ is \emph{unnested} if
  all of its atomic subformulae are unnested. Write  ${\mathcal U}$ for the collection of unnested formulae.
\end{definition}
  
\begin{remark} If $L$ is relational every formula is unnested. 
\end{remark}

\begin{definition}\label{defn_pp_fmla} A formula $\phi \in L$ is \emph{positive primitive}
  ~(a \emph{pp formula}), if it is made up from unnested atomic formulae using only conjunction and existential quantification.
\end{definition}

Our next definition is similar to that appearing in \cite{Marker}, p57.

\begin{definition}   Let $Y$ be $\UA$.\footnote{Other choices seen in the literature for
  similar definitions are all atomic formulae, all quantifier free
  formulae, all quantifier free formulae which in which negation does
  not occur and all existential formulae. \cite{Marker}, p57, uses all
  atomic formulae rather than the unnested ones as an initial step in
  the inductive definition.}  Let $M$ and $N$ be $L$-structures,
  $\bar{a}\in {^{<\lambda}M}$ and $\bar{b}\in {^{\lh(\bar{a})}N}$. Let
  $h:\bar{a}\longrightarrow \bar{b}$ be given by $h(a_i)=b_i$ for all
  $i<\lh(\bar{a})$.

  Let $(M,\bar{a})\sim_{0} (N,\bar{b})$ if for all formulae
  $\psi(\bar{v})\in Y$ with $\lh(\bar{v})=\lh(\bar{a})$ we have
  $M\thinks \psi(\bar{a})$ if and only if $N\thinks\psi(\bar{b})$. In
  this case we say $h$ is a \emph{partial isomorphism}.

  We let $Q_0$ be the set of all such partial isomorphisms (for all pairs $(M,\bar{a})$, $(N,\bar{b})$). 
 
  For $0<\alpha$ let $(M,\bar{a})\sim_{\alpha} (N,\bar{b})$ if for all $\beta<\alpha$ the following two conditions hold
  \begin{eqn}
    \begin{split} & \hbox{for all }\bar{c}\in {^{<\mu}M}\hbox{ there is some }\bar{d}\in {^{\lh(\bar{c})}N}
                      \hbox{ such that } (M,\bar{a}\bar{c}) \sim_\beta (N,\bar{b}\bar{d}) \\
                 & \hbox{for all }\bar{d}\in {^{<\mu}N}\hbox{ there is some }\bar{c}\in {^{\lh(\bar{d})}M}
                      \hbox{ such that } (M,\bar{a}\bar{c}) \sim_\beta (N,\bar{b}\bar{d}).
  \end{split}\end{eqn}

Equivalently, for all $\alpha$ we have $(M,\bar{a})\sim_{\alpha+1} (N,\bar{b})$ if 
the following two conditions hold
  \begin{eqn}
    \begin{split} & \hbox{for all }\bar{c}\in {^{<\mu}M}\hbox{ there is some }\bar{d}\in {^{\lh(\bar{c})}N} \hbox{ such that } (M,\bar{a}\bar{c}) \sim_\alpha (N,\bar{b}\bar{d}) \\
     & \hbox{for all }\bar{d}\in {^{<\mu}N}\hbox{ there is some }\bar{c}\in {^{\lh(\bar{d})}M} \hbox{ such that } (M,\bar{a}\bar{c}) \sim_\alpha (N,\bar{b}\bar{d}).
  \end{split}\end{eqn}and for all limit $\alpha\in \On$ we have $(M,\bar{a})\sim_{\alpha} (N,\bar{b})$ if and only if for all $\beta<\alpha$ we have
  $(M,\bar{a})\sim_{\beta} (N,\bar{b})$.

  We say $h\in Q_{\alpha}$ if $(M,\bar{a})\sim_{\alpha} (N,\bar{b})$.

  Write $M\sim_\alpha N$ if $(M,\emptyset)\sim_{\alpha} (N,\emptyset)$.
 \end{definition}

\section{Scott-Karp analysis without formulae}\label{Scott_analysis_wo_fmlae} 

Let $M$, $N$ be $L$-structures and $\bar{a}\in {^{<\lambda}M}$, $\bar{b}\in {^{<\lambda}N}$ with
$\lh(\bar{a})=\lh(\bar{b})$. In this section we
define functions $F^\alpha_{M,\bar{a}}$, $G^\alpha_{M,N\bar{a}}$ and $H^\alpha_{M,  \bar{a}}$, and
show they contain all of the information needed to
prove Karp-style results (Proposition (\ref{prop_funs_agree_equiv_sim_classical_G}) and
Proposition (\ref{prop_funs_agree_equiv_sim_classical_FH})) showing equivalences between 
$\sim_\alpha$ holding between pairs of pairs and equality of the corresponding functions.
Thus one does not actually need to assemble information concerning
$(M,\bar{a})$ into an infinitary sentence, as Karp did, in order to prove such theorems.

\vskip12pt

\begin{definition} Let $Y =  \emph{UA} $ be as in the previous section.\end{definition}

\vskip12pt

\begin{definition} For $M$ an $L$-structure and $\bar{a}\in {^{<\lambda}M}$ define the function $I_{M,\bar{a}}$ as follows.
  Set \begin{eqn}\begin{split}
      \dom(I_{M,\bar{a}}) =  \setof{(\psi,f)}{\psi \in Y\sss\sss\&\sss\sss \exists \ssss n & \in \omega \sss\sss \Var(\psi)= \set{v_0,\dots,v_{n-1}} \\
         &\sss\sss\&\sss\sss
    f:N\longrightarrow \lh(\bar{a})} ,
    \end{split}\end{eqn}and for $(\psi,f)\in\dom(I_{M,\bar{a}})$ set\begin{eqn}
   I_{M,\bar{a}}(\psi,f)  = \begin{dcases}\quad
            1 \;\; \hbox{ if } M\thinks \psi(a_{f(0)},a_{f(1)},\dots a_{f(n-1)})) \\
            \quad 0 \;\; \hbox{ otherwise.}
           \end{dcases}
\end{eqn}
\end{definition}

\begin{definition}\label{defn_fns} Let $M$, $N$ be $L$-structures and $\bar{a}\in {^{<\lambda}M}$.
  We inductively define functions $F^\alpha_{M,\bar{a}}$, $G^{\alpha}_{M,N,\bar{a}}$ and $H^\alpha_{M,\bar{a}}$.
  For $\beta<\alpha$ we will have
  \[ F^{\beta}_{M,\bar{a}} \subseteq F^{\alpha}_{M,\bar{a}} ,\sss\sss H^{\beta}_{M,\bar{a}} \subseteq H^{\alpha}_{M,\bar{a}}  \hbox{ and }
  G^{\beta}_{M,N,\bar{a}}\subseteq G^{\alpha}_{M,N,\bar{a}}  .\]

  Let
  \[ F^0_{M,\bar{a}} = G^0_{M,N,\bar{a}} = H^0_{M,\bar{a}} = I_{M,\bar{a}}. \]

  Elements of the domain of $F^{\alpha}_{M,\bar{a}}$  will either be elements of $\dom(I_{M,\bar{a}})$ or a pair whose second entry is an ordinal less than $\alpha$.
  Element of the domains of the $G^\alpha_{M,N,\bar{a}}$ and $H^{\alpha}_{M,\bar{a}}$ will either
  be elements of $\dom(I_{M,\bar{a}})$ or triples whose last entry is an ordinal less than $\alpha$.
  For $0<\beta<\alpha$ we will have
    \[ F^{\alpha}_{M,\bar{a}}\on \beta = F^{\beta}_{M,\bar{a}},\sss\sss H^{\alpha}_{M,\bar{a}}\on \beta = H^{\beta}_{M,\bar{a}} \hbox{ and }  G^{\alpha}_{M,N,\bar{a}}\on \beta = G^{\beta}_{M,N,\bar{a}} .\]
    Thus, in each case, for limit $\alpha$ the functions are completely determined by the definitions for $\beta<\alpha$. Furthermore, for $\alpha+1$ and tuples
    in their domain with last element $\beta<\alpha$ the functions are determined by the definitions for $\alpha$.

    Lastly, we treat for $\alpha+1$ tuples in the functions' domains with final element $\alpha$.

The triple $(K,\bar{s}\bar{t},\alpha)$ is an element of $\dom(H^{\alpha+1}_{M,\bar{a}})$ if
$K$ is an $L$-structure, $\bar{s}\in {^{\lh(\bar{a})}N}$ and $\bar{t}\in {^{<\mu}N}$.

If  $(K,\bar{s}\bar{t},\alpha)$ is an element of $\dom(H^{\alpha+1}_{M,\bar{a}})$ we set
\begin{eqn}
   H^{\alpha+1}_{M,\bar{a}}(K,\bar{s}\bar{t},\alpha)  = \begin{dcases}\quad
            1 \;\; \hbox{ if there is some } \bar{c}\in {^{\lh(\bar{t})}M}\hbox{ such that } H^\alpha_{M,\bar{a}\bar{c}} = H^\alpha_{K,\bar{s}\bar{t}} \\
            \quad 0 \;\; \hbox{ otherwise.}
           \end{dcases}
\end{eqn}

The triple $(K,\bar{s}\bar{t},\alpha)$ is an element of $\dom(G^{\alpha+1}_{M,N,\bar{a}})$
if and only if $K\in \set{M,N}$, $\bar{s}\in {^{\lh(\bar{a})}K}$ and $\bar{t}\in {^{<\mu}K}$.

For $(K,\bar{s}\bar{t},\alpha) \in \dom(G^{\alpha+1}_{M,N,\bar{a}})$, if $K=M$ set $J=N$ and if $K=N$ set $J=M$, and set
\begin{eqn}
   G^{\alpha+1}_{M,N,\bar{a}}(K,\bar{s}\bar{t},\alpha)  = \begin{dcases}\;\;
     1 \; \hbox{ if there is some } \bar{c}\in {^{\lh(\bar{t})}M}\hbox{ such that }
     G^\alpha_{M,N,\bar{a}\bar{c}} = G^\alpha_{K,J,\bar{s}\bar{t}} \\
            \;\; 0 \;\hbox{ otherwise.}
           \end{dcases}
\end{eqn}

A pair $(x,\alpha)$ is an element of $\dom(F^{\alpha+1}_{M,\bar{a}})$ if and only if there is $L$-structure $K$, $\bar{s}\in {^{\lh(\bar{a})}K}$
and $\bar{t}\in {^{<\mu}N}$ such that $x = F^{\alpha}_{K,\bar{s}\bar{t}}$.

 If  $K$ is an $L$-structure, $\bar{s}\in {^{\lh(\bar{a})}K}$ and $\bar{t}\in {^{<\mu}N}$ then
\begin{eqn}
   F^{\alpha+1}_{M,\bar{a}}(F^{\alpha}_{K,\bar{s}\bar{t}},\alpha)  = \begin{dcases}\quad
            1 \;\; \hbox{ if there is some } \bar{c}\in {^{\lh(\bar{t})}M}\hbox{ such that } F^\alpha_{M,\bar{a}\bar{c}} = F^\alpha_{K,\bar{s}\bar{t}} \\
            \quad 0 \;\; \hbox{ otherwise.}
           \end{dcases}
\end{eqn}

\end{definition}

For $0<\alpha$, clearly, $\dom(H^\alpha_{M,\bar{a}})$ is a proper class, while $F^\alpha_{M,\bar{a}}$ is defined recursively.
The following result shows we can pick our poison: whatever we can prove using one collection of functions we can also
prove with the other.

\begin{lemma}\label{reln_Fs_and_Hs}   Let $M$ be an $L$-structure, $\bar{a}\in {^{<\lambda}|M|}$,  and $\alpha\in\On$.
  Let $K$ be an $L$-structure, $\bar{s}\in {^{\lh(\bar{a})}|K|}$ and $\bar{t}\in {^{<\mu}N}$. Let $\beta<\alpha$.
  Then
                \[ F^{\alpha}_{M,\bar{a}}(F^\beta_{K,\bar{s}\bar{t}},\beta) = H^{\alpha}_{M,\bar{a}}(K,\bar{s}\bar{t},\beta) .\]
  Moreover, if further $N$ is an $L$-structure and $\bar{b}\in {^{\lh(\bar{a})}|N|}$ 
  \[ F^{\alpha}_{M,\bar{a}} = F^{\alpha}_{N,\bar{b}} \hbox{ if and only if } H^{\alpha}_{M,\bar{a}} = H^{\alpha}_{N,\bar{b}} .\]
\end{lemma}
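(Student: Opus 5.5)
We prove both parts simultaneously by induction on $\alpha$, with the following inductive hypothesis $(\ast)_\alpha$: for all $L$-structures $M,N$ and all $\bar{a}\in{^{<\lambda}M}$, $\bar{b}\in{^{\lh(\bar{a})}N}$ we have $F^{\alpha}_{M,\bar{a}} = F^{\alpha}_{N,\bar{b}}$ if and only if $H^{\alpha}_{M,\bar{a}} = H^{\alpha}_{N,\bar{b}}$. The first displayed identity at level $\alpha$ will be derived from $(\ast)_\beta$ for $\beta<\alpha$, and $(\ast)_\alpha$ will then follow from that identity. (We read the hypothesis $\bar{t}\in{^{<\mu}N}$ in the statement as $\bar{t}\in{^{<\mu}K}$, as the definitions evidently intend.)

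The base case $\alpha=0$ is immediate, since $F^0_{M,\bar{a}} = H^0_{M,\bar{a}} = I_{M,\bar{a}}$; the first claim is vacuous there.

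For the first claim at $\alpha>0$ with $\beta<\alpha$, we use the restriction property of Definition (\ref{defn_fns}). Values of $F^\alpha_{M,\bar{a}}$ and $H^\alpha_{M,\bar{a}}$ at arguments whose last entry is $\beta$ coincide with the values of $F^{\beta+1}_{M,\bar{a}}$ and $H^{\beta+1}_{M,\bar{a}}$ there. By definition,
\[ F^{\beta+1}_{M,\bar{a}}(F^\beta_{K,\bar{s}\bar{t}},\beta)=1 \iff \exists\bar{c}\in{^{\lh(\bar{t})}M}\ F^\beta_{M,\bar{a}\bar{c}}=F^\beta_{K,\bar{s}\bar{t}}, \]
and the analogous equivalence holds for $H$. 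By $(\ast)_\beta$, applied to the pairs $(M,\bar{a}\bar{c})$ and $(K,\bar{s}\bar{t})$, the two existential conditions are equivalent, so the values agree.

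One subtlety needs checking before the two values can be compared. $F^{\beta+1}_{M,\bar{a}}$ is indexed by the \emph{function} $F^\beta_{K,\bar{s}\bar{t}}$, not by the triple. We therefore need the $F$-value to be well defined: if $F^\beta_{K,\bar{s}\bar{t}}=F^\beta_{K',\bar{s}'\bar{t}'}$, the definition must return the same answer. It does, because the defining condition refers only to the function $F^\beta_{K,\bar{s}\bar{t}}$. A second point concerns lengths: $\lh(\bar{t})$ is recoverable from $F^\beta_{K,\bar{s}\bar{t}}$ via the length information in $\dom(I)$. In fact this is not even needed, since equality $F^\beta_{M,\bar{a}\bar{c}} = F^\beta_{K,\bar{s}\bar{t}}$ already forces a common domain.

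For $(\ast)_\alpha$ with $\alpha>0$ we argue in two directions.

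Suppose first that $H^\alpha_{M,\bar{a}}=H^\alpha_{N,\bar{b}}$. Then the $I$-parts agree. Every element of $\dom(F^\alpha_{M,\bar{a}})$ beyond $\dom(I)$ has the form $(F^\beta_{K,\bar{s}\bar{t}},\beta)$ with $\beta<\alpha$, and the same set of such pairs makes up the domain for $N,\bar{b}$, since $\lh(\bar{a})=\lh(\bar{b})$. By the first claim, at each such argument the value of $F^\alpha_{M,\bar{a}}$ equals $H^\alpha_{M,\bar{a}}(K,\bar{s}\bar{t},\beta)=H^\alpha_{N,\bar{b}}(K,\bar{s}\bar{t},\beta)$, which in turn equals the value of $F^\alpha_{N,\bar{b}}$ there.

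Conversely, suppose $F^\alpha_{M,\bar{a}}=F^\alpha_{N,\bar{b}}$. Every triple $(K,\bar{s}\bar{t},\beta)$ in the common domain of the $H$'s yields an argument $(F^\beta_{K,\bar{s}\bar{t}},\beta)$, and the first claim transports the equality of values back.

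The main obstacle is bookkeeping rather than ideas. Making the induction well founded requires that the first claim at $\alpha$ use only $(\ast)_\beta$ for $\beta<\alpha$; this holds, because the values with last entry $\beta$ reduce to level $\beta+1\le\alpha$, and those in turn are decided by level-$\beta$ equalities. The remaining care goes into the domain-matching arguments.
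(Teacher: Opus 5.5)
Your proof is correct and follows essentially the same argument as the paper. It proceeds by induction on $\alpha$, unwinding the defining clauses of $F^{\beta+1}_{M,\bar a}$ and $H^{\beta+1}_{M,\bar a}$ and invoking the ``Moreover'' part at level $\beta$; the differences are presentational: you reduce each $\beta<\alpha$ directly to level $\beta+1$ instead of splitting into limit and successor cases, and you spell out the well-definedness and domain-matching checks that the paper dismisses as immediate.
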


\begin{proof} We work by induction on $\alpha$. The limit $\alpha$ case is immediate by induction.
  For successor $\alpha$ with $\beta+1<\alpha$ the result is also immediate by induction. 
  For sucessor $\alpha+1$ we have $F^{\alpha+1}_{M,\bar{a}}(F^\alpha_{K,\bar{s}\bar{t}},\alpha) = 1$
  \begin{eqn}
    \begin{split}
       & \hbox{  if and only if there is some $\bar{c}\in {^{\lh(\bar{t})}M}$ such that } F^{\alpha}_{K,\bar{s}\bar{t}} = F^{\alpha}_{M,\bar{a}\bar{c}} \\
& \hbox{  if and only if there is some $\bar{c}\in {^{\lh(\bar{t})}M}$ such that  } H^{\alpha}_{K,\bar{s}\bar{t}} = H^{\alpha}_{M,\bar{a}\bar{c}}\\
& \hbox{ if and only if }H^{\alpha+1}_{M,\bar{a}}(K,\bar{s}\bar{t},\alpha)  =1.
    \end{split}
  \end{eqn}where the middle equality follows from by induction from the ``Moreover'' part of the conclusion for $\alpha$.
 The ``Moreover'' part of the conclusion for $\alpha+1$ is now immediate.  
\end{proof}

The next lemma shows what happens when we apply the functions to domain elements derived from the
function parameters themselves.

\begin{lemma}\label{lemma_self_classical_H} For each $0\le \beta<\alpha$ if $\bar{c}\in {^{<\mu}M}$ then \[
 F^\alpha_{M,\bar{a}}(F^\beta_{M,\bar{a}\bar{c}},\beta) =  G^{\alpha}_{M,N,\bar{a}}( M,\bar{a}\bar{c},\beta) = H^\alpha_{M,\bar{a}}(M,\bar{a}\bar{c},\beta) =1 .\]
\end{lemma}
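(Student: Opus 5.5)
The plan is to reduce to the case $\alpha=\beta+1$ and then read off each value directly from the successor clause of Definition (\ref{defn_fns}), with $\bar{c}$ itself as the witness.

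First, the reduction. Definition (\ref{defn_fns}) arranges that for $\beta<\alpha$ the restriction of $F^\alpha_{M,\bar{a}}$ (respectively $G^\alpha_{M,N,\bar{a}}$, $H^\alpha_{M,\bar{a}}$) to domain elements with last entry $\beta$ agrees with $F^{\beta+1}_{M,\bar{a}}$ (respectively $G^{\beta+1}_{M,N,\bar{a}}$, $H^{\beta+1}_{M,\bar{a}}$). This holds because the functions increase with the index: $F^{\beta+1}_{M,\bar{a}}\subseteq F^{\alpha}_{M,\bar{a}}$, and likewise for $G$ and $H$. So it suffices to evaluate each function at stage $\beta+1$ on the element whose final coordinate is $\beta$.

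Next, the $H$ case. The triple $(M,\bar{a}\bar{c},\beta)$ lies in $\dom(H^{\beta+1}_{M,\bar{a}})$. To see this, take $K=M$, $\bar{s}=\bar{a}$ and $\bar{t}=\bar{c}$. The value is $1$ exactly when some $\bar{c}'\in{^{\lh(\bar{c})}M}$ satisfies $H^\beta_{M,\bar{a}\bar{c}'}=H^\beta_{M,\bar{a}\bar{c}}$. Choosing $\bar{c}'=\bar{c}$ witnesses this trivially.

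The $G$ case is the same. We have $(M,\bar{a}\bar{c},\beta)\in\dom(G^{\beta+1}_{M,N,\bar{a}})$ since $K=M\in\set{M,N}$. Here $J=N$, and the defining condition asks for $\bar{c}'$ with $G^\beta_{M,N,\bar{a}\bar{c}'}=G^\beta_{M,N,\bar{a}\bar{c}}$. Again $\bar{c}'=\bar{c}$ works. The only subtlety is if $M$ and $N$ happen to be the same structure, so that the choice of $J$ is ambiguous. Even then $J=N=M$, and the two sides coincide syntactically, so the witness $\bar{c}$ still works.

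The $F$ case also follows the same pattern. The pair $(F^\beta_{M,\bar{a}\bar{c}},\beta)$ is in $\dom(F^{\beta+1}_{M,\bar{a}})$ by definition, witnessed by $K=M$, $\bar{s}=\bar{a}$, $\bar{t}=\bar{c}$. Its value is $1$ because $F^\beta_{M,\bar{a}\bar{c}}=F^\beta_{M,\bar{a}\bar{c}}$. One point needs care: the value $F^{\beta+1}_{M,\bar{a}}(x,\beta)$ must not depend on which representation $x=F^\beta_{K,\bar{s}\bar{t}}$ is chosen. This is fine, since the defining condition refers only to $x$ through the equation $F^\beta_{M,\bar{a}\bar{c}'}=x$. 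Alternatively, the $F$ value equals the $H$ value directly by Lemma (\ref{reln_Fs_and_Hs}).

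The main obstacle is only bookkeeping rather than mathematics. One must verify domain membership, given the typos in the stated domains ($\bar{t}\in{^{<\mu}N}$ should read ${^{<\mu}K}$), and check that the restriction/monotonicity conventions really transfer the stage-$(\beta+1)$ value to stage $\alpha$. The substantive step in each case is simply that $\bar{c}$ witnesses itself.
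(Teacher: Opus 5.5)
Your proposal is correct and follows the paper's proof: you reduce to $\alpha=\beta+1$ using the coherence of the inductive definitions, then note that $\bar{c}$ itself witnesses the existential in the successor clause for $H$ and for $G$, and you handle the $F$ case either the same way or via Lemma (\ref{reln_Fs_and_Hs}). Your remarks on domain membership, on the case $M=N$, and on the well-definedness of the $F$ clause are sound bookkeeping that the paper leaves implicit.
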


\begin{proof} By the inductive definitions of the functions,
  it suffices to show the result for $\alpha=\beta+1$. For the $H$s, by the definition of $H^{\beta+1}_{M,\bar{a}}$ we have $H^{\beta+1}_{M,\bar{a}}(M,\bar{a}\bar{c},\beta) =1 $
  if and only if there is some $\bar{c}'\in {^{\lh(\bar{c})}M}$ such that $H^\beta_{M,\bar{a}\bar{c}'} = H^\beta_{M,\bar{a}\bar{c}}$. However, taking $\bar{c}'=\bar{c}$
  shows the latter is always true. The proof for the $G$s is identical. The result for the $F$s can be shown similarly, or follows immediately from Lemma (\ref{reln_Fs_and_Hs}). 
\end{proof}

We can now give Karp-style results, a weaker version for the $G$s and a stronger one for the $F$s and $H$s.

\begin{proposition}\label{prop_funs_agree_equiv_sim_classical_G} Let $M$, $N$ be $L$-structures,
  $\bar{a}\in {^{<\lambda}M}$ and $\bar{b}\in {^{\lh(\bar{a})}N}$.
  Let $\alpha\in \On$. Then $G^\alpha_{M,N,\bar{a}} = G^\alpha_{N,M,\bar{b}}$ if and only if
  $(M,\bar{a})\sim_\alpha (N,\bar{b})$.
\end{proposition}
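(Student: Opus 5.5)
The plan is to prove the statement by induction on $\alpha$, simultaneously for all $\bar a\in{^{<\lambda}M}$ and $\bar b\in{^{\lh(\bar a)}N}$. Throughout I read the definition of $G^{\alpha+1}$ so that the witness $\bar c$ for $G^{\alpha+1}_{M,N,\bar a}$ ranges over the first structure of the subscript. Thus for $G^{\alpha+1}_{N,M,\bar b}$ the witness ranges over ${^{\lh(\bar t)}N}$. I will also note that $\dom(G^\alpha_{M,N,\bar a})=\dom(G^\alpha_{N,M,\bar b})$, since both use $K\in\set{M,N}$ and tuples $\bar s$ of length $\lh(\bar a)=\lh(\bar b)$. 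So equality of the two functions is just agreement of values.

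A preliminary observation is that $\sim_{\alpha+1}$ implies $\sim_\alpha$. This follows by taking $\bar c$ to be the empty tuple in the forth clause, which is permitted since the empty tuple lies in ${^{<\mu}M}$. Consequently $\sim_\alpha$ implies $\sim_\beta$ for all $\beta<\alpha$, by a trivial induction with unions at limits.

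For the base case $\alpha=0$, $G^0=I$. The equality $I_{M,\bar a}=I_{N,\bar b}$ says exactly that $\bar a$ and $\bar b$ satisfy the same unnested atomic formulae under all index maps $f$, which is $(M,\bar a)\sim_0(N,\bar b)$. For limit $\alpha$, both sides are conjunctions over $\beta<\alpha$: the functions are unions of their restrictions, and $\sim_\alpha$ is defined as $\sim_\beta$ for all $\beta<\alpha$. So the induction hypothesis closes this case.

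For the successor step $\alpha+1$, first suppose $G^{\alpha+1}_{M,N,\bar a}=G^{\alpha+1}_{N,M,\bar b}$, and let $\bar c\in{^{<\mu}M}$. By Lemma (\ref{lemma_self_classical_H}), $G^{\alpha+1}_{M,N,\bar a}(M,\bar a\bar c,\alpha)=1$. Hence $G^{\alpha+1}_{N,M,\bar b}(M,\bar a\bar c,\alpha)=1$, which gives $\bar d\in{^{\lh(\bar c)}N}$ with $G^\alpha_{N,M,\bar b\bar d}=G^\alpha_{M,N,\bar a\bar c}$. The induction hypothesis then yields $(M,\bar a\bar c)\sim_\alpha(N,\bar b\bar d)$. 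The back clause is symmetric, using the triple $(N,\bar b\bar d,\alpha)$.

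Conversely, suppose $(M,\bar a)\sim_{\alpha+1}(N,\bar b)$. By the preliminary observation $(M,\bar a)\sim_\alpha(N,\bar b)$, and the induction hypothesis gives equality of the $G^\alpha$'s. This covers $\dom(I)$ and all triples whose last entry is below $\alpha$. For a triple $(K,\bar s\bar t,\alpha)$, suppose $G^{\alpha+1}_{M,N,\bar a}$ takes value $1$ there, witnessed by $\bar c\in M$ with $G^\alpha_{M,N,\bar a\bar c}=G^\alpha_{K,J,\bar s\bar t}$. The forth clause gives $\bar d$ with $(M,\bar a\bar c)\sim_\alpha(N,\bar b\bar d)$. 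By the induction hypothesis $G^\alpha_{N,M,\bar b\bar d}=G^\alpha_{M,N,\bar a\bar c}=G^\alpha_{K,J,\bar s\bar t}$, so $G^{\alpha+1}_{N,M,\bar b}$ also takes value $1$ there. The reverse direction uses the back clause. Hence the two functions agree.

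The main point needing care is the bookkeeping in this last step. The induction hypothesis must be applied to the extended pairs $(\bar a\bar c,\bar b\bar d)$, and the argument chains equalities of functions, which works because equality is transitive. One must also check that the domains and the sides from which witnesses are drawn match under the swap $M\leftrightarrow N$.
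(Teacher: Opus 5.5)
Your proposal is correct and follows essentially the same route as the paper's unified inductive proof. Equality of the $G^{\alpha+1}$'s together with Lemma (\ref{lemma_self_classical_H}) yields the forth and back clauses, and conversely those clauses plus the induction hypothesis on the extended tuples give agreement at triples with last entry $\alpha$; your explicit use of the empty extension to get $\sim_{\alpha+1}\Rightarrow\sim_\alpha$, covering the lower-level part of the domain, fills in a step the paper leaves implicit.
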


\begin{proposition}\label{prop_funs_agree_equiv_sim_classical_FH} Let $M$ be an $L$-structure,
  $\bar{a}\in {^{<\lambda}M}$ and
  $\alpha\in \On$. For any  $L$-structure $N$ and $\bar{b}\in {^{\lh(\bar{a})}N}$ we have
  $F^{\alpha}_{M,\bar{a}} = F^{\alpha}_{N,\bar{b}}$ if and only if
  $H^\alpha_{M,\bar{a}} = H^\alpha_{N,\bar{b}}$ if and only if
  $(M,\bar{a})\sim_\alpha (N,\bar{b})$.
\end{proposition}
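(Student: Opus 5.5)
The first equivalence, $F^{\alpha}_{M,\bar{a}} = F^{\alpha}_{N,\bar{b}}$ if and only if $H^\alpha_{M,\bar{a}} = H^\alpha_{N,\bar{b}}$, is exactly the ``Moreover'' clause of Lemma (\ref{reln_Fs_and_Hs}). So it suffices to prove that $H^\alpha_{M,\bar{a}} = H^\alpha_{N,\bar{b}}$ if and only if $(M,\bar{a})\sim_\alpha (N,\bar{b})$. We do this by induction on $\alpha$, simultaneously for all $L$-structures and all tuples of admissible length.

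For $\alpha=0$ both sides say that the two pairs agree on $I$. Since every $\psi\in Y$ is unnested atomic and the maps $f$ range over all index assignments, $I_{M,\bar{a}} = I_{N,\bar{b}}$ is precisely the statement that $\bar{a}$ and $\bar{b}$ satisfy the same unnested atomic formulae. This is the definition of $\sim_0$.

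For limit $\alpha$, the function $H^\alpha$ is the union of the $H^\beta$ for $\beta<\alpha$, using the coherence $H^\alpha\on\beta=H^\beta$. Likewise $\sim_\alpha$ is the conjunction of the $\sim_\beta$ for $\beta<\alpha$. Hence the limit case follows immediately from the inductive hypothesis.

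The substantive case is the successor $\alpha+1$. The functions $H^{\alpha+1}_{M,\bar{a}}$ and $H^{\alpha+1}_{N,\bar{b}}$ have the same domain, because the domain depends only on $\lh(\bar{a})=\lh(\bar{b})$. By coherence they agree below level $\alpha$ exactly when $H^\alpha_{M,\bar{a}} = H^\alpha_{N,\bar{b}}$, which by induction is $(M,\bar{a})\sim_\alpha(N,\bar{b})$. This in turn is implied by $\sim_{\alpha+1}$, using the standard monotonicity $\sim_{\alpha+1}\Rightarrow\sim_\alpha$, which I would check by a quick separate induction if needed.

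It remains to compare the two functions on triples $(K,\bar{s}\bar{t},\alpha)$. Suppose first that $(M,\bar{a})\sim_{\alpha+1}(N,\bar{b})$, and that $H^{\alpha+1}_{M,\bar{a}}(K,\bar{s}\bar{t},\alpha)=1$, witnessed by $\bar{c}$ with $H^\alpha_{M,\bar{a}\bar{c}}=H^\alpha_{K,\bar{s}\bar{t}}$. The forth clause of $\sim_{\alpha+1}$ gives $\bar{d}$ with $(M,\bar{a}\bar{c})\sim_\alpha(N,\bar{b}\bar{d})$. By induction this means $H^\alpha_{N,\bar{b}\bar{d}}=H^\alpha_{M,\bar{a}\bar{c}}=H^\alpha_{K,\bar{s}\bar{t}}$, so $H^{\alpha+1}_{N,\bar{b}}(K,\bar{s}\bar{t},\alpha)=1$. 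The back clause gives the symmetric direction, so the two functions agree on all such triples.

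Conversely, suppose $H^{\alpha+1}_{M,\bar{a}} = H^{\alpha+1}_{N,\bar{b}}$ and let $\bar{c}\in{^{<\mu}M}$. By Lemma (\ref{lemma_self_classical_H}), $H^{\alpha+1}_{M,\bar{a}}(M,\bar{a}\bar{c},\alpha)=1$, and hence $H^{\alpha+1}_{N,\bar{b}}(M,\bar{a}\bar{c},\alpha)=1$. So there is $\bar{d}$ with $H^\alpha_{N,\bar{b}\bar{d}}=H^\alpha_{M,\bar{a}\bar{c}}$, and by induction $(M,\bar{a}\bar{c})\sim_\alpha(N,\bar{b}\bar{d})$. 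This is the forth clause. The back clause follows symmetrically, starting from $(N,\bar{b}\bar{d},\alpha)$.

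The main obstacle is bookkeeping rather than mathematics. The definitions contain evident typos: the $N$ appearing in $\bar{t}\in{^{<\mu}N}$ should be $K$. One must read the domain of $H^{\alpha+1}$ as ranging over all $K$, $\bar{s}$ and $\bar{t}\in{^{<\mu}K}$, so that the self-triples $(M,\bar{a}\bar{c},\alpha)$ and $(N,\bar{b}\bar{d},\alpha)$ actually lie in the domain. One must also check that the extended tuples stay below length $\lambda$, which holds since $\mu\le\lambda$ and $\lambda$ is infinite. Finally, the induction hypothesis must be applied at the extended tuples $\bar{a}\bar{c}$ and $\bar{b}\bar{d}$, not just at $\bar{a},\bar{b}$; this is why the induction is stated uniformly over all pairs.
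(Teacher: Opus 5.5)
Your proposal is correct and follows the paper's own proof. Like the paper, you reduce to the $H$'s via the ``Moreover'' clause of Lemma (\ref{reln_Fs_and_Hs}), then induct on $\alpha$ uniformly over all pairs, using Lemma (\ref{lemma_self_classical_H}) for one successor direction and the forth and back clauses for the other. Your explicit check of agreement below level $\alpha$ in the successor step fills a step the paper leaves implicit; it relies on $\sim_{\alpha+1}\Rightarrow\sim_\alpha$, which needs no separate induction, since it follows at once by taking $\bar{c}$ to be the empty sequence in the forth clause.
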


The propositions have very similar proofs. Both are by induction.
Consequently, we give a unified proof.

\begin{proof}  The proof is by induction. In the statement of Proposition (\ref{prop_funs_agree_equiv_sim_classical_G})
  we are given a pair $(N,\bar{b})$. For the proof of Proposition (\ref{prop_funs_agree_equiv_sim_classical_FH})
  let us fix a pair $(N,\bar{b})$ with $N$ an $L$-structure and  $\bar{b}\in {^{\lh(\bar{a})}N}$.

  As we have already shown   $F^{\alpha}_{M,\bar{a}} = F^{\alpha}_{N,\bar{b}}$ if and only if
  $H^\alpha_{M,\bar{a}} = H^\alpha_{N,\bar{b}}$ in Lemma (\ref{reln_Fs_and_Hs}), we only need to
  show $H^\alpha_{M,\bar{a}} = H^\alpha_{N,\bar{b}}$ if and only if
  $(M,\bar{a})\sim_\alpha (N,\bar{b})$ in order to prove Proposition (\ref{prop_funs_agree_equiv_sim_classical_FH}).
  It will become clear we could equally well have chosen to prove 
  $F^\alpha_{M,\bar{a}} = F^\alpha_{N,\bar{b}}$ if and only if
  $(M,\bar{a})\sim_\alpha (N,\bar{b})$.

  In order to run a unified proof, let $E^\alpha_{M,\bar{a}}$ be either 
  $H^\alpha_{M,\bar{a}}$ or $G^\alpha_{M,N,\bar{a}}$ and $E^\alpha_{N,\bar{b}}$ be
  $H^\alpha_{N,\bar{b}}$ or $G^\alpha_{N,M,\bar{b}}$, respectively.

  $\alpha=0$. For all $\psi\in Y$ we have $E^\alpha_{M,\bar{a}}(\psi) = E^\alpha_{N,\bar{b}}(\psi)$ if and only if
  $M\thinks \psi(\bar{a}) \Longleftrightarrow N\thinks \psi(\bar{b})$, so
  $E^\alpha_{M,\bar{a}} = E^\alpha_{N,\bar{b}}$  if and only if  $(M,\bar{a})\sim_0 (N,\bar{b})$.

  Limit $\alpha$. $E^\alpha_{M,\bar{a}} = E^\alpha_{N,\bar{b}}$ if and
  only if for all $\beta< \alpha$ we have $E^\beta_{M,\bar{a}} =
  E^\beta_{N,\bar{b}}$. By induction, for each $\beta<\alpha$, we have
  $E^\beta_{M,\bar{a}} = E^\beta_{N,\bar{b}}$ if and only if
  $(M,\bar{a})\sim_\beta (N,\bar{b})$. Thus for all $\beta< \alpha$ we
  have $E^\beta_{M,\bar{a}} = E^\beta_{N,\bar{b}}$ if and only if for
  all $\beta<\alpha$ we have $(M,\bar{a})\sim_\beta
  (N,\bar{b})$. However, for all $\beta<\alpha$ we have
  $(M,\bar{a})\sim_\beta (N,\bar{b})$ if and only if
  $(M,\bar{a})\sim_\alpha (N,\bar{b})$.

  $\alpha+1$. Let $\bar{c}\in {^{<\mu}M}$. Lemma
  (\ref{lemma_self_classical_H}) shows that
  $E^{\alpha+1}_{M,\bar{a}}(M,\bar{a}\bar{c},\alpha) = 1$.  So
  $E^{\alpha+1}_{M,\bar{a}}(M,\bar{a}\bar{c},\alpha) =
  E^{\alpha+1}_{N,\bar{b}}(M,\bar{a}\bar{c},\alpha)$ if and only if
  there is some $\bar{d}\in {^{\lh(\bar{c})}N}$ such that
  $E^\alpha_{M,\bar{a}\bar{c}} = E^\alpha_{N,\bar{b}\bar{d}}$, which
  holds, by induction, if and only if there is some $\bar{d}\in
  {^{\lh(\bar{c})}N}$ such that $(M,\bar{a}\bar{c})\sim_\alpha
  (N,\bar{b}\bar{d})$. Similarly, if $\bar{d}\in {^{<\mu}N}$ the lemma
  shows that $E^{\alpha+1}_{N,\bar{b}}(N,\bar{b}\bar{d},\alpha)=1$. So
  $E^{\alpha+1}_{N,\bar{b}}(N,\bar{b}\bar{d},\alpha) =
  E^{\alpha+1}_{M,\bar{a}}(N,\bar{b}\bar{d},\alpha)$ if and only if
  there is some $\bar{c}\in {^{\lh(\bar{d})}M}$ such that
  $E^\alpha_{N,\bar{b}\bar{d}} = E^\alpha_{M,\bar{a}\bar{c}}$, which
  holds, by induction, if and only if there is some $c\in
  {^{\lh(\bar{d})}M}$ such that $ (N,\bar{b}\bar{d}) \sim_\alpha
  (M,\bar{a}\bar{c})$. Thus, if $E^{\alpha+1}_{M,\bar{a}} =
  E^{\alpha+1}_{N,\bar{b}}$ then $(M,\bar{a})\sim_{\alpha+1}
  (N,\bar{b})$.

  For the converse, recall $(M,\bar{a})\sim_{\alpha+1} (N,\bar{b})$ if
  and only if
  \begin{eqn}
    \begin{split} & \hbox{for all }\bar{c}\in {^{<\mu}M}\hbox{ there is some }\bar{d}\in {^{\lh(\bar{c})}N} \hbox{ such that } (M,\bar{a}\bar{c}) \sim_\alpha (N,\bar{b}\bar{d}) \\
     & \hbox{for all }\bar{d}\in {^{<\mu}N}\hbox{ there is some }\bar{c}\in {^{\lh(\bar{d})}M} \hbox{ such that } (M,\bar{a}\bar{c}) \sim_\alpha (N,\bar{b}\bar{d}).
  \end{split}\end{eqn}

  By the inductive hypothesis, this holds if and only if 
  \begin{eqn}
    \begin{split} & \hbox{for all }\bar{c}\in {^{<\mu}M}\hbox{ there is some }\bar{d}\in {^{\lh(\bar{c})}N} \hbox{ such that } E^\alpha_{M,\bar{a}c} = E^\alpha_{N,\bar{b}d} \hbox{ and}\\
      & \hbox{for all }\bar{d}\in {^{<\mu}N}\hbox{ there is some }\bar{c}\in {^{\lh(\bar{d})}M}\hbox{ such that } E^\alpha_{M,\bar{a}c} = E^\alpha_{N,\bar{b}d}.
    \end{split}\end{eqn}

  So suppose $(K,\bar{s}\bar{t},\alpha)\in \dom(E^{\alpha+1}_{M,\bar{a}})$.

  The reader should observe a real difference raises its head in here, in that
  $\dom(G^{\alpha+1}_{M,N,\bar{a}}) \subsetneq \dom(H^{\alpha+1}_{M,\bar{a}})$. 
  However, we are still able to treat the two cases notationally identically.

  If  $E^{\alpha+1}_{M,\bar{a}}(K,\bar{s}\bar{t},\alpha) =1$ there is
  is some $\bar{c}\in {^{\lh(\bar{t})}M}$ such that
  $E^\alpha_{M,\bar{a}\bar{c}} = E^\alpha_{K,\bar{s}\bar{t}}$.

  Since $(M,\bar{a})\sim_{\alpha+1} (N,\bar{b})$ there is some $\bar{d}\in {^{\lh(\bar{t})}N}$
  such that $(M,\bar{a}\bar{c})\sim_{\alpha} (N,\bar{b}\bar{d})$. By induction,
  $E^\alpha_{M,\bar{a}\bar{c}} = E^\alpha_{N,\bar{b}\bar{d}}$
  Hence $E^\alpha_{N,\bar{b}\bar{d}} =
  E^\alpha_{K,\bar{s}\bar{t}}$ and so $E^{\alpha+1}_{N,\bar{c}}(K,\bar{s}\bar{t},\alpha) =1$.

  Similarly, if
  $E^{\alpha+1}_{N,\bar{d}}(K,\bar{s}\bar{t},\alpha) =1$ there is some
  $\bar{c}\in {^{\lh(\bar{t})}M}$ such that
  $E^\alpha_{M,\bar{a}\bar{c}} = E^\alpha_{K,\bar{s}\bar{t}}$ and $E^{\alpha+1}_{M,\bar{a}}(K,\bar{s}\bar{t},\alpha) =1$.

  Thus $E^{\alpha+1}_{M,\bar{a}}(K,\bar{s}\bar{t},\alpha) =1$ if and only
  if $E^{\alpha+1}_{N,\bar{c}}(K,\bar{s}\bar{t},\alpha) =1$, as required.
\end{proof}

At this point we can continue with an analogue of Scott's analysis and prove a Scott isomorphism theorem.

\begin{proposition} Let $M$ be an $L$-structure. There is an ordinal $\alpha<\card{M}^+$ such that if 
$\bar{a}$, $\bar{b}\in {^{<\lambda}M}$ with $\lh(\bar{a})=\lh(\bar{b})$ and $(M,\bar{a})\sim_\alpha (M,\bar{b})$ then for all
$\beta\in \On$ we have $(M,\bar{a})\sim_\beta (M,\bar{b})$. The least such $\alpha$ is the \emph{Scott rank} of $M$.
\end{proposition}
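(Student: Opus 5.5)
The plan is the usual Scott argument: look at the sequence of relations $\sim_\beta$ restricted to pairs of tuples from $M$, show it is decreasing, show that once it stops decreasing it stays constant forever, and then bound by cardinality the stage at which it stops. Write $T=\bigcup_{\gamma<\lambda}{^{\gamma}M}$ and, for $\beta\in\On$,
\[ R_\beta=\setof{(\bar{a},\bar{b})\in T\times T}{\lh(\bar{a})=\lh(\bar{b})\ \&\ (M,\bar{a})\sim_\beta (M,\bar{b})}. \]

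\emph{Step 1 (monotonicity).} I will show by induction on $\beta$ that $\gamma<\beta$ implies $R_\beta\subseteq R_\gamma$. This is immediate from the definition of $\sim_\alpha$ for $0<\alpha$, which quantifies over all $\beta<\alpha$. The one case to check is $\gamma=0$. A $\sim_1$-witness with the empty extension gives $(M,\bar{a})\sim_0(M,\bar{b})$, provided $^{<\mu}$ contains the empty tuple. If the paper intends nonempty extensions, I would instead extend by one element $c$ and use that $\UA$-formulae only mention the first $\lh(\bar{a})$ coordinates. Alternatively, via Proposition (\ref{prop_funs_agree_equiv_sim_classical_FH}), $R_\beta$ is the set of pairs with $H^\beta_{M,\bar{a}}=H^\beta_{M,\bar{b}}$, and the inclusions $H^\gamma\subseteq H^\beta$ from Definition (\ref{defn_fns}) give monotonicity directly.

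\emph{Step 2 (stabilisation).} I will show that if $R_\alpha=R_{\alpha+1}$, then $R_\alpha=R_\beta$ for all $\beta\ge\alpha$. I argue by induction on $\beta>\alpha$.
\begin{itemize}
\item At a successor $\beta+1$, the condition for $(M,\bar{a})\sim_{\beta+1}(M,\bar{b})$ refers only to $R_\beta$ applied to the extended tuples $\bar{a}\bar{c},\bar{b}\bar{d}$. Since $R_\beta=R_\alpha$ by induction, this is literally the condition defining $\sim_{\alpha+1}$. Hence $R_{\beta+1}=R_{\alpha+1}=R_\alpha$.
\item At a limit $\beta$, $R_\beta=\bigcap_{\gamma<\beta}R_\gamma=R_\alpha$.
\end{itemize}
One subtlety: the extended tuples $\bar{a}\bar{c}$ must still lie in $T$. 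That is, lengths must stay below $\lambda$, which holds because $\lambda$ is infinite and, in the $(\lambda,\lambda)$ case, is a cardinal closed under such sums (regularity is used here). So the equality of $R_\beta$ and $R_\alpha$ on $T$ is all that is needed.

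\emph{Step 3 (counting).} Since $\tup{R_\beta}_\beta$ is a decreasing sequence of subsets of $T\times T$ that is strictly decreasing until it stabilises, the first $\alpha$ with $R_\alpha=R_{\alpha+1}$ satisfies $\alpha<\card{T\times T}^+$. Injectively pick a pair in $R_\beta\setminus R_{\beta+1}$ for each $\beta<\alpha$. With this $\alpha$, Step 2 gives the statement: if $(\bar{a},\bar{b})\in R_\alpha$ then it lies in every $R_\beta$ for $\beta\ge\alpha$, and it lies in every $R_\beta$ for $\beta<\alpha$ by Step 1.

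\emph{The main obstacle is the cardinal bound.}
\begin{itemize}
\item For $(\mu,\lambda)=(2,\omega)$ and $M$ infinite, $\card{T}=\card{M}$, so $\alpha<\card{M}^+$ as required.
\item For finite $M$, I would argue separately. If $\lh(\bar{a})=\lh(\bar{b})\ge\card{M}$, both tuples enumerate all of $M$, and $\sim_0$ gives a genuine automorphism, which yields $\sim_\beta$ for all $\beta$. So only the finitely many shorter tuples matter, and the stabilisation happens at a finite stage.
\item For $(\lambda,\lambda)$ with $\lambda>\omega$, the count only gives $\alpha<(\card{M}^{<\lambda})^+$. I expect the stated bound $\card{M}^+$ to need either this correction or the hypothesis $\card{M}^{<\lambda}=\card{M}$. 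I would first try to refine the count by stratifying by tuple length, and would otherwise state the bound with $\card{M}^{<\lambda}$.
\end{itemize}
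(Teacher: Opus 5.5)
Your argument is the one the paper intends. Its proof is just a citation of Marker's Lemma (2.4.14), which consists of exactly your three steps: monotonicity of the $\sim_\beta$, stabilisation once $\sim_\alpha=\sim_{\alpha+1}$, and a count of pairs of tuples.

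Your caveat about the bound is well founded. In the $(\lambda,\lambda)$ case that count only yields $\alpha<(\card{M}^{<\lambda})^+$, and the paper supplies nothing beyond the citation. So $\card{M}^+$ is justified by this route only in two situations:
\begin{itemize}
\item when $\card{M}^{<\lambda}=\card{M}$;
\item when $\card{M}<\lambda$, because extending on each side by a tuple enumerating $M$ shows that $\sim_2$ already produces an automorphism carrying $\bar{a}$ to $\bar{b}$.
\end{itemize}

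Two small repairs are needed. First, closure of $\lambda$ under ordinal addition holds for every infinite cardinal, so no regularity is used. Second, in your finite case a tuple of length $\ge\card{M}$ may repeat entries, so it need not enumerate $M$. Argue instead with $\sim_n$ for $n=\card{M}$: it lets you extend $\bar{a}$ one element at a time until its range is all of $M$. The resulting partial isomorphism is then an automorphism of the finite structure $M$. This gives $\alpha\le\card{M}$, which is what $\card{M}^+=\card{M}+1$ demands; merely showing that stabilisation happens at some finite stage is not enough.
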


\begin{proof} As in \cite{Marker}, Lemma (2.4.14).
\end{proof}

\begin{proposition}\label{Scott_iso_thm} Let $M$, $N$ be countable $L$-structures and $\alpha$ the Scott rank of $M$. Then
  $N\isom M$ if and only if $H^\alpha_{M,\emptyset} = H^\alpha_{N,\emptyset}$ and for all $\bar{a}\in {^{<\lambda}M}$ and $\bar{b}\in {^{\lh(\bar{a})}N}$
  if $H^\alpha_{M,\bar{a}} = H^\alpha_{N,\bar{b}}$ then $H^{\alpha+1}_{M,\bar{a}} = H^{\alpha+1}_{N,\bar{b}}$.
\end{proposition}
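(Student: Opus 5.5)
By Proposition (\ref{prop_funs_agree_equiv_sim_classical_FH}), for all tuples and all ordinals $\gamma$ we have $H^\gamma_{M,\bar{a}} = H^\gamma_{N,\bar{b}}$ if and only if $(M,\bar{a})\sim_\gamma (N,\bar{b})$. So the statement is equivalent to the following: $N\isom M$ if and only if $M\sim_\alpha N$ and, for all $\bar{a}\in {^{<\lambda}M}$ and $\bar{b}\in {^{\lh(\bar{a})}N}$, $(M,\bar{a})\sim_\alpha (N,\bar{b})$ implies $(M,\bar{a})\sim_{\alpha+1}(N,\bar{b})$. This is the classical Scott isomorphism theorem phrased via back-and-forth relations. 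I would prove it in this translated form, working with one-point or small-tuple extensions according to the fixed $(\mu,\lambda)$. For countable structures the relevant case is $(2,\omega)$, or more generally tuples of countable length, which can be handled identically.

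For the forward direction, let $\pi:M\to N$ be an isomorphism. I would show by induction on $\gamma$ that $(M,\bar{a})\sim_\gamma (N,\pi\bar{a})$ for every $\bar{a}$ and every $\gamma$, using $\bar{d}=\pi\bar{c}$ and $\bar{c}=\pi^{-1}\bar{d}$ as witnesses. In particular $M\sim_\alpha N$.

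For the second clause, suppose $(M,\bar{a})\sim_\alpha(N,\bar{b})$. Since $\sim_\alpha$ is an equivalence relation on pairs, which is itself a routine induction, we get $(M,\bar{a})\sim_\alpha (M,\pi^{-1}\bar{b})$. Because $\alpha$ is the Scott rank of $M$, this gives $(M,\bar{a})\sim_{\alpha+1}(M,\pi^{-1}\bar{b})$. Combining with $(M,\pi^{-1}\bar{b})\sim_{\alpha+1}(N,\bar{b})$ by transitivity yields $(M,\bar{a})\sim_{\alpha+1}(N,\bar{b})$.

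For the converse, let $P$ be the set of finite partial maps $h:\bar{a}\mapsto\bar{b}$ with $(M,\bar{a})\sim_\alpha(N,\bar{b})$. The empty map lies in $P$ because $M\sim_\alpha N$. The hypothesis upgrades each $h\in P$ to $\sim_{\alpha+1}$, and the definition of $\sim_{\alpha+1}$ then supplies, for every $c\in M$, some $d\in N$ with $(M,\bar{a}c)\sim_\alpha(N,\bar{b}d)$, and symmetrically in the other direction. So $P$ has the back-and-forth property. Each $h\in P$ is also a partial isomorphism, since $\sim_\alpha$ implies $\sim_0$ and the atomic formulae in $\UA$, including those of the form $v_i=v_j$ and $v_{i_n}=f(\dots)$, are preserved.

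Then run Cantor's back-and-forth from the introduction. Enumerate $M$ and $N$ in order type $\omega$, alternate forth and back steps starting from $\emptyset$, and take the union. The union is a bijection preserving all unnested atomic formulae, and preservation of unnested atomic formulae is enough to preserve all of $\tau$. Hence it is an isomorphism. When $\mu=\lambda>\omega$, the same construction still works with one-element extensions, which are tuples of length $1<\mu$. One only needs to check that finite tuples are allowed, and they are.

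The main obstacle is the forward direction's use of transitivity and symmetry of $\sim_\alpha$, and the replacement of $(M,\pi^{-1}\bar{b})$ by $(N,\bar{b})$. I would establish invariance of $\sim_\gamma$ under isomorphism and transitivity by a joint induction on $\gamma$ before starting the main argument.
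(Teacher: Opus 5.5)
Your proposal is correct and is essentially the paper's argument. The paper runs Marker's proof of Scott's isomorphism theorem with Proposition (\ref{prop_funs_agree_equiv_sim_classical_FH}) in place of the formula lemma, which is exactly your translation into $\sim_\alpha$, followed by the Scott-rank argument and a back-and-forth construction. The step you flag as the main obstacle comes for free: that proposition says $(M,\bar{a})\sim_\gamma(N,\bar{b})$ holds precisely when $H^\gamma_{M,\bar{a}}=H^\gamma_{N,\bar{b}}$, so symmetry and transitivity of $\sim_\gamma$ are inherited from equality of functions, and only the easy induction showing isomorphic pairs are $\sim_\gamma$-related remains.
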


\begin{proof} As in \cite{Marker}, Theorem (2.4.15), but using Proposition \ref{prop_funs_agree_equiv_sim_classical_FH}
  for equivalences between
  equality of functions and pairs being related under the
  $\sim_\alpha$ in place of \cite{Marker}, Lemma (2.4.13).
\end{proof}

\section{Relationship with the original Scott-Karp analysis}

  One should note that Proposition
  (\ref{prop_funs_agree_equiv_sim_classical_FH}) is not directly
  inherited from the typical structure of Scott sentences, as a
  conjunction of an infinitary conjunction of existential sentences
  and a single univerally quantified infinitary disjunction, but
  instead mimics the structure of the sentences that appear in
  \cite{CS} and \cite{B00}. This is particulary desirable when one
  generalizes from the truth values being limited to
  $2=\set{0,1}=\set{\top,\bot}$ as in \cite{BMPIII}.

However, we can relate the approach of Proposition (\ref{prop_funs_agree_equiv_sim_classical_FH}) to analysis of $L$-structures to
one involving infinitary sentences.

\begin{definition} Let $M$ be an $L$-structure, $\bar{a}\in {^{<\lambda} M}$ and $\alpha\in \On$.
  Set \[ \phi^0_{M,\bar{a}} = \bigwedge_{\substack{\psi\in \UA \\ M\thinks \psi(\bar{a})}} \psi(\bar{v}) \land
  \bigwedge_{\substack{\psi\in \UA \\ M\thinks \neg \psi(\bar{a})}}\neg \psi(\bar{v}), \]
  for limit $\alpha$ let
  $\phi^\alpha_{M,\bar{a}} = \bigwedge_{\beta<\alpha} \phi^\beta_{M,\bar{a}}$ and let
  \[ \phi^{\alpha+1}_{M,\bar{a}} = \phi^\alpha_{M,\bar{a}}
  \sss\sss \wedge\bigwedge_{\substack{K,\bar{s}\bar{t} \\ M\thinks \exists \bar{u} \phi^\alpha_{K,\bar{s}\bar{t}}(\bar{a},\bar{u})}} \kern-24pt \exists \bar{u}\phi^\alpha_{K,\bar{s}\bar{t}}(\bar{v},\bar{u}) \sss\sss \land
  \bigwedge_{\substack{K,\bar{s}\bar{t} \\ M\thinks \neg\exists  \bar{u}\phi^\alpha_{K,\bar{s}\bar{t}}(\bar{a},\bar{u})}}  \kern-24pt  \neg\exists \bar{u}\phi^\alpha_{K,\bar{s}\bar{t}}(\bar{v},\bar{u}) .\]
\end{definition}

\begin{proposition}\label{related_to_3.2.4}  Let $M$ be an $L$-structure, $\bar{a}\in {^{<\lambda}|M|}$ and $\alpha\in \On$. Let $K$ and $N$ be $L$-structures,
  $\bar{b}\in {^{\lh(\bar{a})}|N|}$, $\bar{s}\in {^{\lh(\bar{a})}|K|}$ and $\bar{t}\in {^{<\mu}|K|}$.
  Then $M\thinks \exists \bar{u} \phi^\alpha_{K,\bar{s}\bar{t}}(\bar{a},\bar{u})$ if and only if
  $H^{\alpha+1}_{M,\bar{a}}(K,\bar{s}\bar{t},\alpha) = 1$ and
  $H^\alpha_{M,\bar{a}}=H^\alpha_{N,\bar{b}}$ if and only if $N\thinks \phi^{\alpha+1}_{M,\bar{a}}(\bar{b})$.
\end{proposition}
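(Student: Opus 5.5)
I would prove, by induction on $\alpha$, the single equivalence
\[ N\thinks \phi^\alpha_{M,\bar{a}}(\bar{b}) \hbox{ if and only if } H^\alpha_{M,\bar{a}} = H^\alpha_{N,\bar{b}} \tag{$*$} \]
for all $L$-structures $M$, $N$ and all $\bar{a}$, $\bar{b}$ of the same length. Both clauses of the proposition then follow from $(*)$ at levels $\alpha$ and $\alpha+1$. I read the final clause of the statement as $(*)$ at level $\alpha+1$ (equivalently, at every level), since the displayed left-hand side $H^\alpha_{M,\bar{a}}=H^\alpha_{N,\bar{b}}$ has index $\alpha$ while the formula has index $\alpha+1$. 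Proving $(*)$ for all ordinals covers either reading.

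\emph{Deriving the first clause from $(*)$ at $\alpha$.} We have $M\thinks \exists\bar{u}\,\phi^\alpha_{K,\bar{s}\bar{t}}(\bar{a},\bar{u})$ if and only if there is $\bar{c}\in{^{\lh(\bar{t})}M}$ with $M\thinks\phi^\alpha_{K,\bar{s}\bar{t}}(\bar{a}\bar{c})$. By $(*)$, applied with $(K,\bar{s}\bar{t})$ in the role of $(M,\bar{a})$ and $(M,\bar{a}\bar{c})$ in the role of $(N,\bar{b})$, this holds if and only if $H^\alpha_{K,\bar{s}\bar{t}}=H^\alpha_{M,\bar{a}\bar{c}}$ for some such $\bar{c}$. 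By the definition of $H^{\alpha+1}_{M,\bar{a}}$, that is exactly $H^{\alpha+1}_{M,\bar{a}}(K,\bar{s}\bar{t},\alpha)=1$.

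\emph{The induction for $(*)$.}
\begin{itemize}
\item For $\alpha=0$: $\phi^0_{M,\bar{a}}$ is the conjunction of the unnested atomic facts and their negations true of $\bar{a}$. So $N\thinks\phi^0_{M,\bar{a}}(\bar{b})$ says precisely that $I_{M,\bar{a}}=I_{N,\bar{b}}$, indexing atomic formulae by pairs $(\psi,f)$ as in the definition of $I$.
\item For limit $\alpha$: both sides are conjunctions over $\beta<\alpha$. On the formula side this is by definition; on the function side it is because $H^\alpha$ is the union of the $H^\beta$. So the limit case is immediate from the induction hypothesis.
\item For $\alpha+1$: $N\thinks\phi^{\alpha+1}_{M,\bar{a}}(\bar{b})$ holds if and only if two things hold. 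First, $N\thinks\phi^\alpha_{M,\bar{a}}(\bar{b})$, which by the induction hypothesis means $H^\alpha_{M,\bar{a}}=H^\alpha_{N,\bar{b}}$. Second, for every $(K,\bar{s}\bar{t})$ the truth value of $\exists\bar{u}\,\phi^\alpha_{K,\bar{s}\bar{t}}$ at $\bar{a}$ in $M$ equals its truth value at $\bar{b}$ in $N$. By the first clause, already derived from $(*)$ at $\alpha$, the second condition says $H^{\alpha+1}_{M,\bar{a}}(K,\bar{s}\bar{t},\alpha)=H^{\alpha+1}_{N,\bar{b}}(K,\bar{s}\bar{t},\alpha)$ for every triple in the domain. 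Together with agreement at level $\alpha$, this is exactly $H^{\alpha+1}_{M,\bar{a}}=H^{\alpha+1}_{N,\bar{b}}$, because the domain of $H^{\alpha+1}$ consists of the domain of $H^\alpha$ plus the triples with last entry $\alpha$.
\end{itemize}

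\emph{Expected main obstacle: legitimacy of $\phi^{\alpha+1}_{M,\bar{a}}$.} Its conjunctions range over all $L$-structures $K$, a proper class, so it is not literally a formula of $L_{\infty\lambda}$. I would handle this inside the same induction. By $(*)$ at $\alpha$ and Proposition (\ref{prop_funs_agree_equiv_sim_classical_FH}), the formula $\phi^\alpha_{K,\bar{s}\bar{t}}$ depends, up to logical equivalence, only on the $H^\alpha$-value, that is, on the $\sim_\alpha$-class of $(K,\bar{s}\bar{t})$. I would then argue there are only set-many such classes for tuples of each length below $\lambda$. The route is the standard one: check that $\phi^\alpha$ itself can be taken from a set of formulae (equivalently, that there are only set-many $\sim_\alpha$-types, shown inductively), and pick one representative per class, so that the conjunction is over a set. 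A smaller point is to keep the free variables $\bar{v}$ of length $\lh(\bar{a})$ and the bound $\bar{u}$ of length $\lh(\bar{t})<\mu$ separate, so that the substitutions $(\bar{a},\bar{u})$ and $\bar{a}\bar{c}$ match the concatenation convention used in the $H$-definition.
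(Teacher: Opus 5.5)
Your proposal is correct and follows the same route as the paper's ``double induction''. The first clause comes from the level-$\alpha$ equivalence ($N\models\phi^\alpha_{M,\bar a}(\bar b)$ if and only if $H^\alpha_{M,\bar a}=H^\alpha_{N,\bar b}$) together with the definition of $H^{\alpha+1}_{M,\bar a}$. The level-$(\alpha+1)$ equivalence, which is what the paper's proof actually establishes and so matches your reading of the index, then follows from the first clause applied to both $(M,\bar a)$ and $(N,\bar b)$. Your version is more complete than the paper's sketch, which treats only the successor step and does not discuss the base and limit cases or the fact that the conjunctions in $\phi^{\alpha+1}_{M,\bar a}$ range over a proper class.
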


\begin{proof} By double induction. $M\thinks \exists \bar{u} \phi^\alpha_{K,\bar{s}\bar{t}}$ if and only if
  there is some $\bar{c} {^{\lh(\bar{t})} M}$ such that $M\thinks \phi^\alpha_{K,\bar{s}\bar{t}}(\bar{a}\bar{c})$.
  Applying the induction hypothesis and the definition of $H^{\alpha+1}_{M,\bar{a}}$, we have $M\thinks \phi^\alpha_{K,\bar{s}\bar{t}}(\bar{a}\bar{c})$ if and only if
  $H^{\alpha}_{K,\bar{s}\bar{t}} = H^\alpha_{M,\bar{a}\bar{c}}$ if and only if $H^{\alpha+1}_{M,\bar{a}}(K,\bar{s}\bar{t})=1$, as required.

  On the other hand, $H^{\alpha+1}_{M,\bar{a}}=H^{\alpha+1}_{N,\bar{b}}$ if and only if
    for every $K$,  $\bar{s}\in {^{\lh(\bar{a})}K}$ and $\bar{t}\in {^{\lh(\bar{a}+1)}K}$
 we have $M\thinks \exists \bar{u} \phi^\alpha_{K,\bar{s}\bar{t}}(\bar{a},\bar{u})$ if and only if 
 $N\thinks \exists \bar{u} \phi^\alpha_{K,\bar{s}\bar{t}}(\bar{b},\bar{u})$ if and only if $N\thinks \phi^{\alpha+1}_{M,\bar{a}}(\bar{b})$.
\end{proof}

\section{Degrees and ranks}

We introduce the notion of \emph{quantifier degree} which we will use in the ensuing.

\begin{definition}\label{quantifier_degree} (Quantifier degree) Let $\phi \in L_{\infty\lambda}$.
  The \emph{quantifier degree} of $\phi$, $d(\phi)$, is defined by induction on its complexity.
  \begin{center}
\vskip-10pt
\begin{tabular}{ll}
 $\phi$ atomic  & \qquad $d(\phi)= 0$ \\
  $\phi = \neg \psi$ & \qquad $d(\phi)= d(\psi)$ \\
  $\phi = \psi \to \chi$ &  \qquad $d(\phi)= \max(\set{d(\psi), d(\chi)})$ \\
  $\phi = \Wedge \Phi$ & \qquad $d(\phi) = \sup\setof{d(\psi)}{\psi \in \Phi}$ \\
  $\phi = \Vee \Phi$ & \qquad $d(\phi) = \sup\setof{d(\psi)}{\psi \in \Phi}$ \\ 
  $\phi = \forall V \psi$ & \qquad $d(\phi) = d(\psi) + 1$ \\
  $\phi = \exists V \psi$ & \qquad $d(\phi) = d(\psi) + 1$ \\
\end{tabular}
\end{center}
\end{definition} 

\begin{note-nono} When $\lambda=\omega$ this definition induces the `usual' quantifier degree on $\bar{L}_{\infty\omega}$.
  Observe, for example, writing $\exists x $ for $\exists \set{x}$ for $\set{x}$ a singleton,
  if $\phi$ is $\exists \set{x,y}\sss x=y$ and $\psi$ is $\exists x
  \exists y  \sss x=y$ then $d(\phi)=1$, but $d(\psi)=2$.
\end{note-nono}

\begin{definition} Let $\alpha$ be an ordinal.
  $L^{\alpha}_{\infty,\lambda}$ is the collection of $L_{\infty,\lambda}$-formulae $\phi$
  such that $d(\phi)\le \alpha$. We write $\bar{L}^{\alpha}_{\infty,\lambda} $ for
  $\bar{L}_{\infty,\lambda} \cap L^{\alpha}_{\infty,\lambda}$.
\end{definition}

We also introduce a \emph{modified rank}, which takes into account the complexity gained in allowing function symbols in
the language. This rank is a slight variant of that found in
\cite{EFT}, Exercise (XII.3.15),  which in turn is a variant of
a rank found in \cite{Flum}, p.254. It is used in \cite{EFT} to give a framework for a proof of a Fra\"iss\'e-style theorem
for arbitrary finite first order languages (\emph{i.e.},
possibly with function symbols and symbols for constants).

\begin{definition}\label{modified_rank} (Modified Rank) Let $\phi \in L_{\infty\lambda}$.
  The \emph{modified rank} of terms $t$ and formulae $\phi$, $r(t)$, $r(\phi)$, is defined by induction on their complexity.
  \begin{center}
\vskip-10pt
\begin{tabular}{ll}
  $t=v$  & \qquad $r(t)= 0$ \\
  $t = c$ & \qquad $r(t)= 1$ \\
  $t= f(s_0,\dots,s_{n-1})$ & \qquad $r(t)= 1 + r(s_0) + \dots r(s_{n-1})$ \\
  $\phi = R(t_0,\dots,t_{m-1})$ & \qquad $r(\phi)= r(t_0) + \dots r(t_{m-1})$ \\
  $\phi = \hbox{`}t_0 = t_1\hbox{'}$ & \qquad $r(\phi)=  \max\set{0,r(t_0) + r(t_1) -1}$ \\
  $\phi = \neg \psi$ & \qquad $r(\phi)= r(\psi)$ \\
  $\phi = \psi \to \chi$ & \qquad $r(\phi)= \max(\set{r(\psi), r(\chi)})$ \\
  $\phi = \Wedge \Phi$ & \qquad $r(\phi) = \sup\setof{r(\psi)}{\psi \in \Phi}$ \\
  $\phi = \Vee \Phi$ & \qquad $r(\phi) = \sup\setof{r(\psi)}{\psi \in \Phi}$ \\ 
  $\phi = \forall V \psi$ & \qquad $r(\phi) = r(\psi) + 1$ \\
  $\phi = \exists V \psi$ & \qquad $r(\phi) = r(\psi) + 1$ \\
\end{tabular}
\end{center}
\end{definition} 

\begin{remark}\label{m_rank_le_qdeg} Note for any formula $\phi$ we have $d(\phi)\le r(\phi)$.
If the language is relational then we always have equality: $d$ and $r$ are the same.
However in languages without function symbols, but with constants we already have $r\ne d$. 
\end{remark}

Unnested atomic formulae were defined in Definition (\ref{UA_fmlae}).

\begin{definition}\label{defn_nested} An atomic $L$-formula $\phi$ is \emph{nested} if it is not unnested, \emph{i.e.}, it is of one of the following forms:
  \begin{itmz}
  \item $c= d$, where $c$, $d\in {\mathcal C}$ are constant symbols
  \item $f(t_0,\dots,t_{n-1}) =t_n$ or $t_n = f(t_0,\dots,t_{n-1})$, where $f$ is an $n$-ary function symbol for some $n<\omega$ and $t_0,\dots,t_n$ are $L$-terms, not all of which are variable symbols.
    \item $R(t_0,\dots,t_{n-1})$, where $R$ is an $n$-ary relation symbol for some $n<\omega$ and $t_0,\dots,t_{n-1}$ are $L$-terms, not all of which are variable symbols.
  \end{itmz}
\end{definition}

We next show how nested atomic formulae are equivalent to unnested formulae. This material appears in \cite{Hodges},\S{}2.6, ``by example.''
We give a fuller account for completeness. See also, for example, \cite{EFT}, Chapter VIII,\S{}1.

\begin{definition}\label{defn_phi^exists} We recursively define for each atomic $L$-formula $\phi(\bar{v})$,
  with free variables $\bar{v}$, an unnested, in fact positive primitive, formula $\phi^\exists(\bar{v})$.

  For unnested atomic formulae we simply take $\phi^\exists = \phi$.

  Now suppose $\phi$ is nested.

If $\phi$ is $c=d$, where $c$, $d$ are constant symbols we set $\phi^\exists$ to be $\exists v_0 \sss v_0 = c\sss\sss\&\sss\sss v_) = d $.

Now, suppose $\phi$ is $f(t_0,\dots,t_{n-1}) =t_n$, where $f$ is an $n$-ary function symbol for some $n<\omega$ and $t_0,\dots,t_n$ are $L$-terms, not all of which are variable symbols.

Suppose that $S = \setof{i_j}{j\le m}$ is an enumeration of the collection of $i\le n$ such that $t_i$ is not a variable symbol.
For $i\ni S$ let $v_i = t_i$ and let $\setof{v_{i_j}}{j\le m}$ be a collection of distinct variable symbols
none of which appear in any of the $t_i$ for $i<n$. We set $\phi'$ to be
\[\exists v_{i_0}\sss\dots\sss \exists v_{i_{m}} \sss\sss
v_{i_0} = t_{i_0} \sss\sss\&\sss\sss \dots \sss\sss\&\sss\sss v_{i_{m}} = t_{i_{m}} \sss\sss\&\sss\sss   f(v_0,\dots,v_{n-1}) = v_n.\] 
We then set $\phi^\exists$ to be 
\begin{eqn}
  \begin{split} \exists v_{i_0}\sss\dots\sss \exists v_{i_{m}} \sss\sss (v_{i_0} = t_{i_0}& )^{\exists} \sss\sss\&\sss\sss \dots
    \sss\sss\&\sss\sss (v_{i_{m}} = t_{i_{m}})^\exists\\
    &
    \sss\sss\&\sss\sss f(v'_0,\dots,v'_{n-1}) = v'_n
  \end{split}
\end{eqn}
where for $i\ni S$ we have $v'_i = v_i$ and for $i=i_j\in S$ we have $v'_i = v_{i_j}$.

The definition for $\phi$ being $t_n = f(t_0,\dots,t_{n-1})$ is almost identical, we just switch the order of the final conjunct to be $ v_n = f(v_0,\dots,v_{n-1})$.

Finally, suppose $\phi$ is $R(t_0,\dots,t_{n-1})$, where $R$ is an $n$-ary relation symbol for some $n<\omega$ and $t_0,\dots,t_{n-1}$ are $L$-terms, not all of which are variable symbols.
We have a similar definition to the previous one.

Suppose that $S = \setof{i_j}{j\le m}$ is an enumeration of the collection of $i< n$ such that $t_i$ is not a variable symbol.
For $i\ni S$ let $v_i = t_i$ and let $\setof{v_{i_j}}{j\le m}$ be a collection of distinct variable symbols
none of which appear in any of the $t_i$ for $i<n$. 
We then set $\phi^\exists$ to be 
\begin{eqn}
  \begin{split} \exists v_{i_0}\sss\dots\sss \exists v_{i_{m}} \sss\sss (v_{i_0} = t_{i_0}& )^{\exists} \sss\sss\&\sss\sss \dots
    \sss\sss\&\sss\sss (v_{i_{m}} = t_{i_{m}})^\exists\\
    &
    \sss\sss\&\sss\sss R(v'_0,\dots,v'_{n-1}),
  \end{split}
\end{eqn}
where for $i\ni S$ we have $v'_i = v_i$ and for $i=i_j\in S$ we have $v'_i = v_{i_j}$.
\end{definition}

\begin{definition} If $\psi$ is an arbitrary $L$-formula let $\psi^\exists$ be the formula derived from $\psi$ by for each atomic subformula $\phi$ replacing $\phi$ by $\phi^\exists$.
\end{definition}

\begin{proposition}\label{rank=degree_of_ext} If $\psi$ is an arbitrary $L$-formula then $r(\psi)=d(\psi^\exists)$.
\end{proposition}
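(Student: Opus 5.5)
The plan is to prove first the atomic case, by induction on the term structure of the atomic formula $\phi$, and then lift to arbitrary $\psi$ by induction on the complexity of $\psi$. The lift is routine. The operation $\psi\mapsto\psi^\exists$ commutes with $\neg$, $\to$, $\Wedge$, $\Vee$, $\forall V$ and $\exists V$, and the clauses for $d$ and $r$ at these connectives and quantifiers in Definitions (\ref{quantifier_degree}) and (\ref{modified_rank}) are literally identical. So if $r(\chi)=d(\chi^\exists)$ holds for the immediate subformulae, it holds for $\psi$. Sups over $\Phi$ pass through because the equality holds termwise. Everything therefore reduces to showing $r(\phi)=d(\phi^\exists)$ for atomic $\phi$.

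For the atomic case I would first prove an auxiliary claim about terms: for every term $t$ and every variable $w$ not occurring in $t$, we have $d((w=t)^\exists) = r(t)-1$ when $t$ is not a variable, and $0$ when $t$ is a variable. The base cases are as follows. For $t$ a variable, $w=t$ is unnested and has degree $0$. For $t=c$, $w=c$ is unnested, so it has degree $0=r(c)-1$.

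For $t=f(s_0,\dots,s_{n-1})$, Definition (\ref{defn_phi^exists}) introduces one existential quantifier for each non-variable $s_i$, and for each such $i$ adds a conjunct $(v_{i}=s_{i})^\exists$. Under the degree function, a block $\exists v_{i_0}\dots\exists v_{i_m}$ of singleton quantifiers adds $m+1$, and conjunction takes a max. This gives degree equal to the number of non-variable $s_i$ plus the max of the $d((v_i=s_i)^\exists)$. By induction each of these is $r(s_i)-1$. The target is $r(t)-1=r(s_0)+\dots+r(s_{n-1})$. So I need the count of non-variable arguments plus the max of the $(r(s_i)-1)$ to equal the sum of the $r(s_i)$.

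This is exactly where I expect the main obstacle. Sum versus max plus count do not agree in general: with two arguments of large rank, the max undercounts. So either the quantifiers must be nested rather than placed in parallel, or the reading of the definition must be adjusted. I would first try to read Definition (\ref{defn_phi^exists}) so that the conjuncts for the non-variable arguments are produced sequentially. That is, unnest $s_{i_0}$, and inside its scope unnest $s_{i_1}$, and so on, so that quantifier depths add. Then the degree becomes $\sum_j\bigl(1+d((v_{i_j}=s_{i_j})^\exists)\bigr)=\sum_j r(s_{i_j})$, and variable arguments contribute $0$ on both sides, which matches. If the literal parallel reading is forced, I would instead check whether the equality should be weakened to $d(\psi^\exists)\le r(\psi)$, which does follow from the max computation. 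I would then flag that inequality as the one needed later for the Fraïssé-style argument.

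With the term claim in hand, the three nested atomic forms follow from the same bookkeeping. For $R(t_0,\dots,t_{m-1})$ the computation gives $\sum r(t_i)$, matching $r$; this is why there is no $-1$ here. For $t_0=t_1$ nested, the unnesting of $f(\dots)=t_n$ treats $t_n$ as one more argument, which produces the $r(t_0)+r(t_1)-1$ in the definition of $r$. For $c=d$ one quantifier is added, giving $1=1+1-1$. The unnested atomic case gives $0=0$. Combining these with the connective induction completes the proof.
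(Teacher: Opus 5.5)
Your reduction to the atomic case and your lift through $\neg$, $\to$, $\Wedge$, $\Vee$ and the quantifiers are fine. The obstacle you isolate is real. It is not a question of how to read Definition (\ref{defn_phi^exists}): it refutes the proposition as stated. That definition puts the formulae $(v_{i_j}=t_{i_j})^\exists$ side by side as conjuncts under a prefix of $m+1$ singleton quantifiers. So $d(\phi^\exists)=(m+1)+\max_j d((v_{i_j}=t_{i_j})^\exists)$, which is exactly your count-plus-max. For instance, let $R$ be binary, $f$ unary and $c$ a constant symbol. Then $\psi=R(f(c),f(c))$ has $r(\psi)=r(f(c))+r(f(c))=2+2=4$, whereas
\[ \psi^\exists=\exists x\,\exists y\,(\exists z\,(z=c\wedge x=f(z))\wedge\exists z'\,(z'=c\wedge y=f(z'))\wedge R(x,y)) \]
has $d(\psi^\exists)=2+1=3$. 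So the paper's one-line justification (``immediate from the definitions'') is mistaken. No proof of the equality exists for the definitions as given, and your proposal, read as a proof of the literal statement, necessarily stalls exactly where you say it does.

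Both of your fallbacks are sound, but each changes the statement.

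For the first, ``inside its scope'' must mean inside the scope of \emph{all} the quantifiers introduced by $(v_{i_0}=t_{i_0})^\exists$. In other words, you must redefine $\phi^\exists$ as the prenex form of the positive primitive formula, with singleton quantifiers. Merely writing $\exists v_{i_0}((v_{i_0}=t_{i_0})^\exists\wedge\exists v_{i_1}(\cdots))$ still gives degree $3$ in the example above. With the prenex version, $d$ simply counts the introduced variables. A non-variable argument $t$ accounts for $1+(r(t)-1)=r(t)$ of them, and your bookkeeping (including the $-1$ for equations and the case $c=d$) then yields equality.

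For the second, the literal definition gives $d(\psi^\exists)\le r(\psi)$ by induction, since a maximum of non-negative numbers is at most their sum. Combine this with $r(\chi)=d(\chi)$ for unnested $\chi$, which holds because every unnested atomic formula has modified rank $0$ and the remaining clauses agree. Together these are all that the ``Consequently'' clause of Theorem (\ref{invariance_of_fml_level_by_level}) needs, so the inequality is the version worth stating and proving.
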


\begin{proof} Immediate from the definitions.
\end{proof}

\begin{theorem}
  Let $\phi(\bar{v})$ be an atomic formula with $\bar{v} = \tup{v_0, \dots, v_{n-1}}$. Let $M$ be an $L$-structure and $\bar{a} =\tup{a_0, \dots, a_{n-1}} \in |M|$.
  Then $M\thinks\phi(\bar{a})$ if and only if $M\thinks \phi^\exists(\bar{a})$.
\end{theorem}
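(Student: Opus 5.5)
We prove the statement by induction on the construction of $\phi^\exists$ in Definition (\ref{defn_phi^exists}). Concretely, the induction is on the total number of occurrences of function and constant symbols in $\phi$; equivalently, on $r(\phi)$, which by Proposition (\ref{rank=degree_of_ext}) is the number of nested existential quantifier layers in $\phi^\exists$. We need a slightly stronger statement to make the induction work. For every assignment $\sigma$ of elements of $|M|$ to the free variables, $M\thinks\phi[\sigma]$ if and only if $M\thinks\phi^\exists[\sigma]$. This is needed because the recursive clauses feed subformulae $v_{i_j}=t_{i_j}$ whose free variables include the fresh $v_{i_j}$ as well as the $\bar v$.

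The base case is $\phi$ unnested. Then $\phi^\exists=\phi$ and there is nothing to prove. For $\phi$ of the form $c=d$, the formula $\exists v_0\,(v_0=c\wedge v_0=d)$ is witnessed by $c^M$ exactly when $c^M=d^M$; this case is immediate. The main case is $\phi$ of the form $f(t_0,\dots,t_{n-1})=t_n$ with $S$ the set of indices of non-variable terms. Each $v_{i_j}=t_{i_j}$ is an atomic formula of strictly smaller complexity. We check this case by case:
\begin{itemize}
\item if $t_{i_j}$ is a constant, the formula is already unnested;
\item if $t_{i_j}=g(\bar s)$, it has one fewer function-symbol occurrence than $\phi$.
\end{itemize}
So the induction hypothesis applies to each conjunct under any assignment.

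For the forward direction, assume $M\thinks\phi[\sigma]$. Extend $\sigma$ by sending $v_{i_j}$ to $t_{i_j}^M[\sigma]$. Since the $v_{i_j}$ are fresh, they do not occur in the $t_i$, so this does not change the values $t_i^M[\sigma]$. Each $(v_{i_j}=t_{i_j})^\exists$ then holds by the induction hypothesis. The final conjunct $f(v'_0,\dots,v'_{n-1})=v'_n$ holds because its arguments evaluate to the $t_i^M[\sigma]$. For the converse, any witnesses for the existential quantifiers must, by the induction hypothesis, equal $t_{i_j}^M[\sigma]$. The last conjunct then gives $f^M(t_0^M,\dots,t_{n-1}^M)=t_n^M$. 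The cases $t_n=f(\dots)$ and $R(t_0,\dots,t_{n-1})$ are handled identically.

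The only real obstacle is bookkeeping. First, the freshness of the $v_{i_j}$ must guarantee that extending the assignment does not change the values of the original terms. Second, the induction measure must decrease on the subformulae $v_{i_j}=t_{i_j}$; this is why we induct on term complexity rather than formula structure. The paper's sketchy definition also treats the case $\phi$ of the form $c=t$ with $t$ compound only implicitly, via the $f$-clauses. We will read it so that every nested atomic formula falls under exactly one clause, and, where necessary, treat $c=f(\dots)$ by the $t_n=f(\dots)$ clause with $t_n=c$.
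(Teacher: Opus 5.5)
Your proof is correct and follows the paper's approach: induction on the complexity of the terms, with the unnested formulae and $c=d$ as base cases, and the remaining clauses handled by taking the values $t_{i_j}^M$ as witnesses. You also write out the details the paper dismisses as ``similar'', namely the quantification over arbitrary assignments and the freshness of the new variables.

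One small correction: the two measures you call equivalent are not. The raw count of function and constant symbols need not drop in the $R$-clause; for example, $R(g(h(v)))$ and its conjunct $w=g(h(v))$ each contain two such symbols. Run the induction on $r(\phi)$ instead, which strictly decreases in every clause.
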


\begin{proof}
It is immediate the theorem holds for the unnested atomic formulae since for these formulae $\phi^\exists =\phi$.. 
We  show remainder of the theorem by induction on complexity of $L$-terms appearing in atomic formulae.

Let $\phi(\bar v)$ be ``$c=d$'' with $c,d \in |C|$ constant symbols. 
Let $\mathfrak{c}^M$ and $\mathfrak{d}^M$ be the interpretations of $c$ and $d$ in $M$.

Then $M\thinks (c=d)(\bar{a})$ if and only if $M\thinks \mathfrak{c}^M = \mathfrak{d}^M$.
However $M\thinks \mathfrak{c}^M = \mathfrak{d}^M$ if and only if
$M\thinks \exists x (x = \mathfrak{c}^M \sss\sss\&\sss\sss x = \mathfrak{d}^M)$,
and the latter holds if and only if $M\thinks \exists x (x = c \sss\sss\&\sss\sss x = d)(\bar{a})$,
that is, $M\thinks (c=d)^\exists(\bar{a})$.

The proofs in the cases where $\phi(\bar{v})$ is ``$ \tau_m (\bar{v}) =f(\tau_0(\bar{v}), \dots, \tau_{m-1}(\bar{v}))$''
and for $j \le m$ we have $\tau_j (\bar{v})$ are $L$-terms which are
not all variable symbols and $f$ is an $m$-ary function symbol, and where $\phi(\bar{v})$ is
``$R(\tau_0(\bar{v}), \dots, \tau_{m-1}(\bar{v}))$'' and
for $j < m$ the $\tau_j (\bar{v})$ are $L$-terms, not being all variable symbols and $R$ is an $m$-ary relation symbol are similar. 
\end{proof}

\begin{corollary} If $\psi$ is an $L$-formula, $M$ is an $L$-structure
  and $\bar{a}\in {^n|M|}$ then $M\thinks \psi(\bar{a})$ if and only if
  $M\thinks\psi^\exists(\bar{a})$.
\end{corollary}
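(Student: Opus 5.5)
The corollary follows from the preceding theorem by induction on the construction of $\psi$, following the clauses of Definition (\ref{defn_L^mu_kappa_lambda}). To make the induction go through, I strengthen the statement so that it quantifies over all assignments. The claim becomes: for every $L$-formula $\psi$, every $L$-structure $M$ and every assignment $\bar{a}$ of elements of $|M|$ to the free variables of $\psi$, we have $M\thinks\psi(\bar{a})$ if and only if $M\thinks\psi^\exists(\bar{a})$. This is needed because the quantifier and infinitary clauses change the tuple of parameters.

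Before starting, I record a bookkeeping fact: $\FV(\phi^\exists)=\FV(\phi)$ for atomic $\phi$. This holds because every variable introduced in Definition (\ref{defn_phi^exists}) is bound by an existential quantifier, and the fresh variables are chosen so as not to occur in the original terms. Consequently $\FV(\psi^\exists)=\FV(\psi)$, and the substitution $\psi\mapsto\psi^\exists$ commutes with each connective and quantifier:
\[(\neg\chi)^\exists=\neg\chi^\exists,\qquad \Bigl(\Wedge\Phi\Bigr)^\exists=\Wedge\setof{\chi^\exists}{\chi\in\Phi},\qquad (\exists V\chi)^\exists=\exists V\chi^\exists,\]
and similarly for $\Vee$, $\forall$ and $\to$. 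In particular $\psi^\exists$ is a legitimate formula with the same fixed set of free variables. If the fresh variables could clash with variables bound further out in $\psi$, I would first rename the bound variables of $\psi$, which does not affect satisfaction.

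The induction itself is short. The atomic case is exactly the preceding theorem, applied with the given assignment. For negation, $M\thinks\neg\chi(\bar{a})$ if and only if $M\not\thinks\chi(\bar{a})$, which by the induction hypothesis holds if and only if $M\not\thinks\chi^\exists(\bar{a})$. For $\Wedge\Phi$ and $\Vee\Phi$, apply the induction hypothesis to each member of $\Phi$ with the same assignment. For $\exists V\chi$, $M\thinks\exists V\chi(\bar{a})$ if and only if some assignment $\bar{a}'$ extending $\bar{a}$ on $V$ satisfies $\chi$. By the induction hypothesis, applied to $\bar{a}'$ (which is why the claim must range over all assignments), this holds if and only if $\bar{a}'$ satisfies $\chi^\exists$, that is, if and only if $M\thinks\exists V\chi^\exists(\bar{a})$. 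The $\forall$ case is dual.

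The only real obstacle is the variable hygiene in the bookkeeping step: checking that the fresh variables in $\phi^\exists$ do not capture or clash with variables of the surrounding formula. Once renaming is allowed, the rest is a routine structural induction.
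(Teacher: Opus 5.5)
Your proposal is correct and takes the same route as the paper. The paper's proof is the one-line remark that the corollary follows from the preceding theorem by the inductive definition of satisfaction, that is, a structural induction on $\psi$ with that theorem as the atomic case; your write-up makes explicit what the paper leaves implicit, namely the strengthening to arbitrary assignments, the check that $\FV(\psi^\exists)=\FV(\psi)$ so $\psi^\exists$ is well formed, and the variable hygiene (where renaming is in fact unnecessary, since the fresh variables of $\phi^\exists$ are bound inside it and do not occur in the terms of $\phi$).
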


\begin{proof} By the inductive definition of the interpretation of instantiations of formulae.
\end{proof}

\section{Karp's theorem}\label{Karp_theorem}

We give an alternative proof of a result of Karp (\cite{Karp})
generalizing a result of Fra\"iss\'e (\cite{Fra56}) for finite $\alpha$.

\begin{definition}\label{defn_equiv_alpha} Let $M$, $N$ be $L$-structures and $\alpha\in\On$. Then $M\equiv_\alpha N$ if
  $M$ and $N$ satisfy the same sentences of modified rank $\le \alpha$.
\end{definition}

\begin{theorem}\label{invariance_of_fml_level_by_level} 
  Let $\alpha$ be an ordinal. Let $M$, $N$ be $L$-structures,
  $\bar{a}\in {^{<\lambda}M}$, $\bar{b}\in {^{\lh(\bar{a})}N}$ and
  $h:\bar{a}\longrightarrow \bar{b}$ such that for all
  $i<\lh(\bar{a})$ we have $h(a_i)=b_i$.

  Then $h\in Q_\alpha$ if and only if 
  for all unnested $L$-formulae
  $\phi(\bar{v})$ with $d(\phi) = \alpha$, we have that $\phi$ is
  invariant under $h$: $M\thinks \phi(\bar{a})$ if and only if
  $N\thinks \phi(\bar{b})$.
  
  Consequently, by Proposition (\ref{rank=degree_of_ext}), $(M,\bar{a}) \sim_\alpha (N,\bar{b})$ if and only
  for all $L$-formulae
  $\psi(\bar{v})$ with $r(\psi) = \alpha$ we have $M\thinks \psi(\bar{a})$ if and only if
  $N\thinks \psi(\bar{b})$.
\end{theorem}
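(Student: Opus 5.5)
We read ``$d(\phi)=\alpha$'' as ``$d(\phi)\le\alpha$'', i.e.\ $\phi\in L^\alpha_{\infty\lambda}$. This is the only sensible reading, since an unnested formula of degree $<\alpha$ can be padded to one of degree exactly $\alpha$. The plan is a simultaneous induction on $\alpha$ of both directions for all pairs of pairs $(M,\bar{a})$, $(N,\bar{b})$ at once. The ``Consequently'' clause follows afterwards. By the Corollary, $M\thinks\psi(\bar{a})$ if and only if $M\thinks\psi^\exists(\bar{a})$, and likewise in $N$. By Proposition (\ref{rank=degree_of_ext}), $r(\psi)=d(\psi^\exists)$, and $\psi^\exists$ is unnested. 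Conversely, every unnested formula $\phi$ satisfies $\phi^\exists=\phi$ and so has $r(\phi)=d(\phi)$. Hence the two formula classes coincide up to equivalence.

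\emph{Forward direction} ($h\in Q_\alpha$ implies invariance). For fixed $\alpha$ we induct on the construction of unnested $\phi$ with $d(\phi)\le\alpha$. Atomic unnested formulae are handled by $\sim_0$, which follows from $\sim_\alpha$ by an easy induction, since $\sim_\alpha$ implies $\sim_\beta$ for $\beta\le\alpha$. Negation, $\Wedge$ and $\Vee$ are immediate. For $\exists V\sss\psi$ with $d(\psi)+1\le\alpha$, the set $V$ has size $<\mu$. Suppose $M\thinks\exists V\psi(\bar{a})$ and pick a witness $\bar{c}\in{^{<\mu}M}$. By the back-and-forth clause with $\beta=d(\psi)<\alpha$ we get $\bar{d}$ with $(M,\bar{a}\bar{c})\sim_\beta(N,\bar{b}\bar{d})$. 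The outer induction hypothesis at $\beta$, applied to the extended tuples, gives $N\thinks\psi(\bar{b}\bar{d})$. The universal quantifier is dual. This is the reason for using the ``for all $\beta<\alpha$'' form of the definition of $\sim_\alpha$: it handles limit ordinals and degree sups uniformly.

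\emph{Converse}. This is where the real content lies, and I expect it to be the main obstacle. I would use the formulae $\phi^\beta_{M,\bar{a}}$ of the previous section together with Proposition (\ref{related_to_3.2.4}) and Proposition (\ref{prop_funs_agree_equiv_sim_classical_FH}).

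First I would check, by induction on $\beta$, that $\phi^\beta_{M,\bar{a}}$ is unnested with $d(\phi^\beta_{M,\bar{a}})\le\beta$. At stage $0$ there are only unnested atomic conjuncts and their negations. At limits we take a conjunction. At successors we add $\exists\bar{u}$ over a tuple of length $<\mu$, which contributes a single quantifier step, since quantification over sets of size $<\mu$ is allowed.

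Second, I would show $N\thinks\phi^\beta_{M,\bar{a}}(\bar{b})$ if and only if $(M,\bar{a})\sim_\beta(N,\bar{b})$. At successors this follows from Proposition (\ref{related_to_3.2.4}) combined with Proposition (\ref{prop_funs_agree_equiv_sim_classical_FH}). At $0$ and at limits it follows directly from the definitions.

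Given these two facts, invariance of $\phi^\alpha_{M,\bar{a}}$ under $h$ finishes the argument: $M\thinks\phi^\alpha_{M,\bar{a}}(\bar{a})$ trivially, so $N\thinks\phi^\alpha_{M,\bar{a}}(\bar{b})$, hence $(M,\bar{a})\sim_\alpha(N,\bar{b})$, i.e.\ $h\in Q_\alpha$.

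The delicate point is size. The conjunctions in $\phi^{\alpha+1}_{M,\bar{a}}$ range over all $(K,\bar{s}\bar{t})$, which is a proper class. I would cut this down by noting that, up to logical equivalence, there is only a set of formulae $\phi^\alpha_{K,\bar{s}\bar{t}}$ for tuples of a given length. This can be proved by induction, bounding the number of $\sim_\alpha$-classes via Proposition (\ref{prop_funs_agree_equiv_sim_classical_FH}): each class is determined by $F^\alpha$, and the $F^\alpha$ range over a set. So one may choose representatives and obtain a legitimate $L_{\infty\lambda}$-formula. If this becomes too cumbersome, the fallback is a direct contrapositive. Assuming $(M,\bar{a})\not\sim_\alpha(N,\bar{b})$, unwind the failure at some $\beta<\alpha$ and some $\bar{c}$ to build a distinguishing formula $\exists V\sss\Wedge_{\bar{d}}\chi_{\bar{d}}$. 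Here each $\chi_{\bar{d}}$ distinguishes $(M,\bar{a}\bar{c})$ from $(N,\bar{b}\bar{d})$ and is obtained by induction with degree $\le\beta$. The conjunction is indexed by the set ${^{\lh(\bar{c})}N}$, so it is a set-sized conjunction, with total degree $\le\beta+1\le\alpha$.
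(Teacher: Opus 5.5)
Your proposal is correct. The forward direction is exactly the argument the paper imports from V\"a\"an\"anen: induction on the formula, using the ``for all $\beta<\alpha$'' form of $\sim_\alpha$ at quantifier steps. Your fallback for the converse, the distinguishing formula $\exists V\bigwedge_{\bar d\in{^{\lh(\bar c)}N}}\chi_{\bar d}$, is the key step of the V\"a\"an\"anen argument the paper cites for the right-to-left direction; there it is phrased as showing that the maps extending $h$ which preserve unnested formulae of degree $\le\beta$ form a back-and-forth system.

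Your primary route for the converse is genuinely different. Instead of building a separate distinguishing formula for each failure of back-and-forth, you show that the single formula $\phi^\alpha_{M,\bar a}$ is unnested of degree $\le\alpha$ and defines the $\sim_\alpha$-class of $(M,\bar a)$, via Proposition (\ref{related_to_3.2.4}) and Proposition (\ref{prop_funs_agree_equiv_sim_classical_FH}). This gives more: a characteristic formula for each class, which is what the introduction promises, and Karp's theorem as an outgrowth of the $F/H$ machinery. It does, however, incur two costs the cited route avoids.

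First, Proposition (\ref{related_to_3.2.4}) is misindexed as printed. What its proof establishes, and what you actually need, is $N\thinks\phi^{\alpha+1}_{M,\bar a}(\bar b)$ iff $H^{\alpha+1}_{M,\bar a}=H^{\alpha+1}_{N,\bar b}$.

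Second, the size problem you flag is real, and the paper ignores it. Your repair via the $F^\alpha$ ranging over a set works, but it can be made lighter. Since $\Wedge$ is applied to a \emph{set} of formulae, it suffices to show by the same kind of induction that, for each length $\gamma$, the distinct formulae $\phi^\alpha_{K,\bar u}$ with $\bar u\in{^\gamma K}$ form a set. At successors, $\phi^{\alpha+1}_{K,\bar u}$ is determined by $\phi^\alpha_{K,\bar u}$ together with the subset of level-$\alpha$ formulae $\chi$ for longer tuples such that $K\thinks\exists\bar u'\chi(\bar u,\bar u')$. With this, no representatives need be chosen and Proposition (\ref{related_to_3.2.4}) applies verbatim.

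The cited route needs none of this, since its only infinitary conjunction is indexed by the set ${^{\lh(\bar c)}N}$.
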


\begin{proof} See \cite{Vaananen}, Theorem (9.27), (3) implies (1), which reuses the proof of
  \cite{Vaananen}, Theorem (7.47), (ii) implies (i), for the left-to-right direction. See the proof
  of \cite{Vaananen}, Theorem (7.47), (i) implies (ii), for the right-to-left direction, making the modification
  of replacing ``finite'' by ``of size $<\lambda$'', taking extensions of size $<\mu$ rather than singletons
  and institing that the functions in the $P_\beta$ must extend $h$. 
\end{proof}

\begin{corollary}   Let $M$, $N$ be $L$-structures and let $\alpha\in\On$. Then
  $M\sim_\alpha N$ if and only if $M\equiv_\alpha N$.
\end{corollary}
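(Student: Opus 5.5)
The corollary is the case $\bar{a}=\bar{b}=\emptyset$ of Theorem (\ref{invariance_of_fml_level_by_level}), together with some care over ``rank $=\alpha$'' versus ``rank $\le\alpha$''. With empty tuples, $h=\emptyset$, and $h\in Q_\alpha$ says exactly that $(M,\emptyset)\sim_\alpha(N,\emptyset)$, that is, $M\sim_\alpha N$. Formulae $\psi(\bar{v})$ with $\lh(\bar{v})=0$ are sentences. The ``Consequently'' clause of the theorem therefore gives: $M\sim_\alpha N$ if and only if $M$ and $N$ agree on all sentences $\psi$ with $r(\psi)=\alpha$.

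It remains to replace ``$r(\psi)=\alpha$'' by ``$r(\psi)\le\alpha$'', as required by Definition (\ref{defn_equiv_alpha}).

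First suppose $M\equiv_\alpha N$. Then $M$ and $N$ agree in particular on sentences of modified rank exactly $\alpha$, so the theorem yields $M\sim_\alpha N$.

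Conversely, suppose $M\sim_\alpha N$. I expect the main obstacle to be the downward closure, and I see two routes.

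The first route uses monotonicity of $\sim$. A routine induction on $\alpha$ shows that $\sim_\alpha$ implies $\sim_\beta$ for all $\beta\le\alpha$: the limit case is immediate, and the successor case reduces, by the inductive hypothesis, to the fact that $\sim_1$ implies $\sim_0$. That fact holds because extending by the empty tuple $\bar{c}=\emptyset\in{^{<\mu}M}$ is permitted, and when $\mu=\omega$ with one-point extensions, restricting from $\bar{a}c$ to $\bar{a}$ preserves $\sim_0$. Given monotonicity, for any sentence $\psi$ with $r(\psi)=\beta\le\alpha$ we have $M\sim_\beta N$, and applying the theorem at level $\beta$ gives $M\thinks\psi \iff N\thinks\psi$.

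The second route pads formulae. Given $\psi$ with $r(\psi)=\beta<\alpha$, form a sentence $\psi'$ equivalent to $\psi$ with $r(\psi')=\alpha$. For instance, take $\psi'=\psi\wedge\theta_\alpha$, where $\theta_\alpha$ is a valid sentence of rank $\alpha$: for successors use $\exists v_0\sss\theta_\gamma$-type iterations built from $v_0=v_0$, and for limits take conjunctions. Since $r$ of a conjunction is the supremum of the ranks of its conjuncts, $r(\psi')=\alpha$. Agreement on $\psi'$ then gives agreement on $\psi$. This second route is cleaner because it avoids monotonicity and uses only Definition (\ref{modified_rank}), so I would present it and remark that the first route also works.
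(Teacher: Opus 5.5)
Your proposal is correct and is essentially the paper's argument: the paper gives no separate proof, and the corollary is just Theorem (\ref{invariance_of_fml_level_by_level}) specialised to $\bar{a}=\bar{b}=\emptyset$. Your extra step reconciling ``$r(\psi)=\alpha$'' with the ``$\le\alpha$'' of Definition (\ref{defn_equiv_alpha}) is sound (the padding route needs infinitary conjunctions at limit $\alpha$, which are available in the $L_{\infty\lambda}$ setting of that theorem), but the theorem is evidently meant to read ``$\le\alpha$'', and then no bridging is needed.
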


\vskip12pt

\begin{theorem}\label{Karp_thm} (Karp's theorem) Suppose $M$ and $N$ are $L$-structures.
  Then $M\equiv_\alpha N$ if and only if $F^\alpha_{M,\emptyset} = F^\alpha_{N,\emptyset}$.
\end{theorem}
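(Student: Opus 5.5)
The plan is to chain together results already established, with the empty tuple as parameter. First, by the Corollary to Theorem (\ref{invariance_of_fml_level_by_level}), $M\equiv_\alpha N$ holds if and only if $M\sim_\alpha N$. By definition, $M\sim_\alpha N$ means $(M,\emptyset)\sim_\alpha(N,\emptyset)$. Second, apply Proposition (\ref{prop_funs_agree_equiv_sim_classical_FH}) with $\bar{a}=\bar{b}=\emptyset$, which is legitimate since $\emptyset\in {^{<\lambda}M}$ and $\emptyset\in {^{0}N}$. It gives $(M,\emptyset)\sim_\alpha(N,\emptyset)$ if and only if $F^\alpha_{M,\emptyset}=F^\alpha_{N,\emptyset}$. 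Composing the two biconditionals yields the theorem.

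The substance therefore lies in the two cited results, and I would check that they fit together at the empty tuple. For the Corollary, I take the case $\bar{a}=\emptyset$ of Theorem (\ref{invariance_of_fml_level_by_level}). There $h$ is the empty map, and $h\in Q_\alpha$ is exactly $M\sim_\alpha N$. The formula side then reads: $M$ and $N$ agree on all sentences $\psi$ with $r(\psi)=\alpha$.

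Definition (\ref{defn_equiv_alpha}), however, asks for agreement on all sentences of rank $\le\alpha$. This mismatch between ``$=\alpha$'' and ``$\le\alpha$'' is the main obstacle. I would close it in one of two ways.

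\begin{itemize}
\item Observe that $\sim_\alpha$ implies $\sim_\beta$ for $\beta\le\alpha$, which is an easy induction from the definition. The Theorem at each $\beta\le\alpha$ then gives agreement on every sentence of rank $\le\alpha$.
\item Alternatively, pad a sentence of rank $\beta<\alpha$ up to rank exactly $\alpha$. For successor steps, conjoin a vacuous quantified formula such as $\exists v_0\, v_0=v_0$, nested as needed; for limits, take an infinitary conjunction. This does not change truth in nonempty structures, and the empty structure can be handled separately.
\end{itemize}

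Either route shows $M\equiv_\alpha N \iff M\sim_\alpha N$. The rest is the purely formal composition described in the first paragraph.

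I would also note the typo in the definition of $\dom(F^{\alpha+1})$, where $\bar{t}\in{^{<\mu}N}$ should read $\bar{t}\in{^{<\mu}K}$. The argument is unaffected, since Proposition (\ref{prop_funs_agree_equiv_sim_classical_FH}) is used as stated. If one prefers $H$ over $F$, Lemma (\ref{reln_Fs_and_Hs}) lets the two be interchanged.
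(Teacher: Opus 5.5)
Your proposal is correct and is essentially the paper's proof, which simply combines Theorem~\ref{invariance_of_fml_level_by_level} (through its Corollary $M\sim_\alpha N \iff M\equiv_\alpha N$) with Proposition~\ref{prop_funs_agree_equiv_sim_classical_FH} at the empty tuple. The paper passes silently over the mismatch between ``$r(\psi)=\alpha$'' in the Theorem and ``rank $\le\alpha$'' in Definition~\ref{defn_equiv_alpha}; your first route settles it cleanly, since taking $\bar{c}$ empty in the definition of $\sim_\alpha$ already gives $(M,\bar{a})\sim_\beta(N,\bar{b})$ for every $\beta<\alpha$.
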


\begin{proof} Combine Theorem (\ref{invariance_of_fml_level_by_level}) and
  Proposition (Proposition (\ref{prop_funs_agree_equiv_sim_classical_FH}).
\end{proof}

\vskip40pt

\section{Hjorth analysis and games}\label{Hjorth_intro_games}

One advantage of our ``formula-free'' approach to Scott-Karp analysis
is that it is relatively flexible. Consequently, in this section we
are able to give a version of our previous results for Hjorth's
generalized analysis, \cite{Hjorth}, \cite{Drucker-arxiv},
\cite{Drucker}, which takes place in the context of actions of
topological groups. In this context it is not at all clear how to give
a version of Scott-Karp analysis in the traditional style of one model
satisfying a formula derived from another model.


We start by giving a version of Drucker's version (\cite{Drucker-arxiv}) of Hjorth's work.
Hjorth (and Drucker) work with Polish group
actions, where the topology on the group has a countable basis. This
is important for some, descriptive set theoretic, aspects of their
work where there is a need for effectivity. Here we work without this
restriction.

Throughout this section, let $X$ be a topological space and $G$ a topological group acting continuously on $X$.

\begin{notation}Write $\mathcal T^*$ for the collection of non-empty, open subsets of $G$.
\end{notation}

\begin{notation} For $U\in \mathcal T^*$ let $U'\pret U$ abbreviate 
  $U'\in \mathcal T^* \sss\sss\&\sss\sss U'\subseteq U$. We frequently employ this convention
  to simplify quantified expressions. So, if $\phi$ is
  some mathematical formula we write $\exists U'\pret U\sss\phi $ for
  $\exists U'\in \mathcal T^* ( U'\subseteq U \sss\sss\&\sss\sss \phi)$.
\end{notation}

We define by induction a series of asymmetric relations on pairs from $X\times {\mathcal T}^*$ indexed by the ordinals.

\begin{definition} For $U$, $V \in {\mathcal T}^*$ and $x$, $y\in X$ we define
  \begin{eqn}\begin{split} & \qquad\qquad (x,U) \le_0  (y,V) \hbox{ if } \obar{U\cdot x} \subseteq \obar{V\cdot y} ,\\
      &\hbox{for every ordinal }\alpha\hbox{, } (x,U) \le_{\alpha+1} (y,V) \hbox{ if }\\
      &\qquad\qquad \forall U'\pret U \sss \sss \exists V' \pret V \sss\sss (y,V') \le_{\alpha}  (x,U'),\\
      &\hbox{for every limit ordinal }\lambda\hbox{, }(x,U) \le_{\lambda} (y,V) \hbox{ if }\\
      &
      \qquad\qquad  \forall \alpha<\lambda\sss\sss (x,U) \le_{\alpha} (y,V).
    \end{split}
  \end{eqn}
\end{definition}

\begin{lemma}\label{le_forms_chain} Let $\alpha\in \On$\vskip-36pt
  \begin{enumerate}[align=parleft,labelsep=1cm, label=(\roman*)] \item The relation $\le_\alpha$ is transitive.
\item  Let $U$, $U'$, $V$, $V' \in {\mathcal T}^*$ with $U'\subseteq U$ and $V\subseteq V'$, and let $x$, $y\in X$.
  Then $ (x,U) \le_{\alpha} (y,V) \hbox{ implies } (x,U') \le_{\alpha} (y,V')$.
\item If $\beta< \alpha$ and $(x,U) \le_{\alpha} (y,V) $ then $(x,U) \le_{\beta} (y,V)$
  \end{enumerate}
  \end{lemma}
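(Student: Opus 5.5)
We prove the three clauses separately, each by induction on $\alpha$. Clause (ii) needs the least work, then (i), and then (iii). Keep in mind throughout that the successor clause \emph{swaps} the two pairs: $(x,U)\le_{\alpha+1}(y,V)$ is witnessed by $(y,V')\le_\alpha (x,U')$. So every inductive hypothesis must be stated for all pairs, not just for the ones in hand.

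For (ii), the case $\alpha=0$ follows from monotonicity of closure: $\obar{U'\cdot x}\subseteq\obar{U\cdot x}\subseteq\obar{V\cdot y}\subseteq\obar{V'\cdot y}$. For $\alpha+1$, take $U''\pret U'$. Then $U''\pret U$, so there is $V''\pret V\subseteq V'$ with $(y,V'')\le_\alpha(x,U'')$, and this $V''$ witnesses the clause for $(x,U')$, $(y,V')$. No inductive hypothesis is needed here. Limits follow from the definition.

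For (i), $\alpha=0$ is transitivity of $\subseteq$. For $\alpha+1$, suppose $(x,U)\le_{\alpha+1}(y,V)\le_{\alpha+1}(z,W)$ and $U'\pret U$. First pick $V'\pret V$ with $(y,V')\le_\alpha(x,U')$. Then apply the second hypothesis to $V'$ to get $W'\pret W$ with $(z,W')\le_\alpha(y,V')$. Transitivity at level $\alpha$ now gives $(z,W')\le_\alpha(x,U')$, as required. Limits are immediate.

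For (iii), it suffices to show $\le_{\alpha+1}\subseteq\le_\alpha$ for every $\alpha$. The general case $\beta<\alpha$ then follows by a routine induction on $\alpha$, since limit stages are intersections of earlier stages. We prove the one-step claim by induction on $\alpha$.

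\emph{Step $\alpha=0$.} This is the step we expect to be the main obstacle. Suppose $(x,U)\le_1(y,V)$, and let $g\in U$ and $O$ be an open neighbourhood of $g\cdot x$. We must show $O$ meets $V\cdot y$.
\begin{itemize}
\item By continuity of $h\mapsto h\cdot x$, choose $U'\pret U$ with $g\in U'$ and $U'\cdot x\subseteq O_1$, where $O_1$ is open and $\obar{O_1}\subseteq O$.
\item The hypothesis gives $V'\pret V$ with $\obar{V'\cdot y}\subseteq\obar{U'\cdot x}\subseteq \obar{O_1}\subseteq O$.
\item Since $V'\ne\emptyset$, this produces a point of $V\cdot y$ inside $O$.
\end{itemize}
Finding $O_1$ uses regularity of $X$, which holds in the intended (Polish) setting. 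If that is unavailable we would try to avoid it by working directly with open sets of $G$, but we expect the regularity assumption to be the honest requirement.

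\emph{Successor step.} Assume $\le_{\beta+1}\subseteq\le_\beta$ and let $(x,U)\le_{\beta+2}(y,V)$. For $U'\pret U$ we get $V'\pret V$ with $(y,V')\le_{\beta+1}(x,U')$. The inductive hypothesis gives $(y,V')\le_\beta(x,U')$, hence $(x,U)\le_{\beta+1}(y,V)$.

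\emph{Limit step.} Let $\alpha$ be a limit and $(x,U)\le_{\alpha+1}(y,V)$; we must show $(x,U)\le_\gamma(y,V)$ for each $\gamma<\alpha$. For $U'\pret U$ the witness $V'$ satisfies $(y,V')\le_\alpha(x,U')$, so by the limit definition $(y,V')\le_\gamma(x,U')$. This yields $(x,U)\le_{\gamma+1}(y,V)$. Since $\gamma+1<\alpha$ is covered by the induction, we get $(x,U)\le_\gamma(y,V)$.
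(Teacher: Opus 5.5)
Your proof is correct and is the standard induction behind the paper's proof. The paper's proof consists only of a citation to Drucker's Lemmas (3.2) and (3.4). As you found, all the content is in the base step $\le_1\subseteq\le_0$ of (iii), and your treatment of (i), (ii) and the successor and limit steps of (iii) is fine.

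You should not try to remove the regularity assumption. Drucker works with Polish $X$, where regularity is automatic. The paper's standing hypothesis that $X$ is an arbitrary topological space is too weak for (iii).

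Here is a counterexample. Take $G=\set{e}$, so $\mathcal T^*=\set{G}$, acting trivially on the Sierpi\'{n}ski space $X=\set{0,1}$ with open sets $\emptyset$, $\set{1}$, $X$. Then $(x,G)\le_0(y,G)$ if and only if $x\in\obar{\set{y}}$. On the other hand, $(x,G)\le_1(y,G)$ if and only if $(y,G)\le_0(x,G)$, that is, $y\in\obar{\set{x}}$. Since $\obar{\set{1}}=X$ and $\obar{\set{0}}=\set{0}$, we have $(1,G)\le_1(0,G)$ but not $(1,G)\le_0(0,G)$.

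So regularity of $X$ is an honest extra hypothesis, needed only in that base step. With it, your argument proves the lemma.
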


\begin{proof} See \cite{Drucker-arxiv}, Lemma (3.2) and Lemma (3.4). (The only non-immediate item is (c).) 
\end{proof}

 We follow classical practice (as recounted in, for example,
 \cite{Harrison-Trainor}) and define equivalence relations on pairs
 from $X\times \mathcal T^*$, indexed by the ordinals, in two
 ways. One way is by symmetrizing the $\le_\alpha$, the other by
 intrinsic definition.

  \begin{definition} For $\alpha \in \On$, $(x,U)$, $(y,V)\in X\times \mathcal T^*$ define
    \[ (x,U) \approx_{\alpha} (y,V) \hbox{ if } (x,U) \le_{\alpha} (y,V) \sss\sss\&\sss\sss (y,V) \le_{\alpha} (x,U). \]
  \end{definition}

  (\cite{Harrison-Trainor}) uses the notation $\equiv_\alpha$ for symmetrization, however we already used this symbol for
  equivalence up to rank $\alpha$, so we use $\approx_\alpha$ here.)
  
\begin{definition} For $(x,U)$, $(y,V)\in X\times \mathcal T^*$ define
  \begin{eqn}\begin{split} & \qquad\qquad (x,U) \sim_0  (y,V) \hbox{ if } \obar{U\cdot x} = \obar{V\cdot y} ,\\
      &\hbox{for every ordinal }\alpha\hbox{, }  (x,U) \sim_{\alpha+1} (y,V) \hbox{ if } (x,U) \sim_{\alpha} (y,V) \hbox{ and} \\
      &\qquad\qquad \forall U'\pret U \sss\sss \exists V' \pret V \sss\sss (y,V') \sim_{\alpha}  (x,U') \hbox{, and}\\
      &\qquad\qquad \forall V'\pret V \sss\sss \exists U' \pret U \sss\sss (x,U') \sim_{\alpha}  (y,V') ,\\
      &\hbox{for every limit ordinal }\lambda\hbox{, }(x,U) \sim_{\lambda} (y,V) \hbox{ if }\\
      &
      \qquad\qquad  \forall \alpha<\lambda\sss\sss (x,U) \sim_{\alpha} (y,V).
    \end{split}
  \end{eqn}
\end{definition}

\begin{lemma} Let $\alpha\in \On$\vskip-12pt
  \begin{enumerate}[align=parleft,labelsep=1cm, label=(\roman*)]
      \item The relation $\approx_\alpha$ is transitive.
\item If $\beta< \alpha$ and $(x,U) \approx_{\alpha} (y,V) $ then $(x,U) \approx_{\beta} (y,V)$
\item The relation $\sim_\alpha$ is transitive.
  \item If $\beta< \alpha$ and $(x,U) \sim_{\alpha} (y,V) $ then $(x,U) \sim_{\beta} (y,V)$
  \end{enumerate}
  \end{lemma}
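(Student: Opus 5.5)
The plan is to dispose of (i) and (ii) using Lemma (\ref{le_forms_chain}), and to prove (iii) and (iv) by simultaneous induction on $\alpha$, directly from the definition of $\sim_\alpha$.

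For (i), suppose $(x,U)\approx_\alpha(y,V)$ and $(y,V)\approx_\alpha(z,W)$. Transitivity of $\le_\alpha$ (Lemma (\ref{le_forms_chain})(i)) gives $(x,U)\le_\alpha(z,W)$. Running the same argument backwards gives $(z,W)\le_\alpha(x,U)$. For (ii), apply Lemma (\ref{le_forms_chain})(iii) to each of the two inequalities making up $\approx_\alpha$.

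Item (iv) comes essentially free from the shape of the definition. For successor $\alpha+1$, the definition of $\sim_{\alpha+1}$ explicitly includes $(x,U)\sim_\alpha(y,V)$ as a conjunct. At limits, $\sim_\lambda$ is by definition the conjunction of all $\sim_\beta$ for $\beta<\lambda$. So I will prove by induction on $\alpha$ that $\sim_\alpha\Rightarrow\sim_\beta$ for all $\beta<\alpha$:
\begin{itemize}
\item for $\alpha+1$, first get $\sim_\alpha$ from the conjunct, then get $\sim_\beta$ for $\beta<\alpha$ from the inductive hypothesis;
\item for limits, the claim holds immediately.
\end{itemize}

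For (iii) I induct on $\alpha$, and prove symmetry of $\sim_\alpha$ alongside transitivity. Symmetry is needed because the definition mixes the orders $(y,V')\sim_\alpha(x,U')$ and $(x,U')\sim_\alpha(y,V')$.
\begin{itemize}
\item \textbf{Base case.} Symmetry and transitivity of $\sim_0$ are symmetry and transitivity of equality of closures $\obar{U\cdot x}$.
\item \textbf{Limit case.} Both properties are immediate from the inductive hypothesis, since $\sim_\lambda$ is an intersection of the earlier relations.
\item \textbf{Successor case: symmetry.} Swapping $(x,U)$ and $(y,V)$ exchanges the two back-and-forth clauses. Symmetry of $\sim_\alpha$ then lets us reorder the pairs inside each clause.
\item \textbf{Successor case: transitivity.} Suppose $(x,U)\sim_{\alpha+1}(y,V)$ and $(y,V)\sim_{\alpha+1}(z,W)$. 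First, $(x,U)\sim_\alpha(z,W)$ by induction. Next, given $U'\pret U$, pick $V'\pret V$ with $(y,V')\sim_\alpha(x,U')$. Then pick $W'\pret W$ with $(z,W')\sim_\alpha(y,V')$. Transitivity of $\sim_\alpha$ gives $(z,W')\sim_\alpha(x,U')$. The back clause, starting from $W'\pret W$, is handled symmetrically.
\end{itemize}

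I expect no step here to be a serious obstacle. The only care needed is keeping track of the orientation of pairs in the clauses, which is why symmetry is carried through the induction.
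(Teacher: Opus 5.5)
Your proposal is correct and takes the same approach as the paper, which calls the lemma immediate and cites Lemma (\ref{le_forms_chain})(iii) for (ii). Your reduction of (i) to transitivity of $\le_\alpha$, and your short inductions for (iii) and (iv), just spell out what the paper leaves implicit. One small remark: carrying symmetry through the induction is harmless but not actually needed, because in your successor step the forth chain $(z,W')\sim_\alpha(y,V')\sim_\alpha(x,U')$ and the back chain $(x,U')\sim_\alpha(y,V')\sim_\alpha(z,W')$ each compose in a single orientation.
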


\begin{proof} Immediate for the most part, using also Lemma (\ref{le_forms_chain}(iii)) for (ii).
\end{proof}


We next include in some observations about games and the relations we have defined, analogous to
those for classical model theory which can be found, for example, in
\cite{Vaananen},\cite{Rothmaler}, \cite{Marker} and \cite{Marker-inf}.

\begin{definition} Let $(x,U)$, $(y,V)\in X\times \mathcal T^*$  and $\alpha\in\On$, $0<\alpha$.
  Define a two-player (dynamic Ehrenfeucht-Fra\"iss\'e-style) game, ${\mathcal G}^s(x,U,y,V,\alpha)$ as follows.
  
  Formally define $z_{-1} = x$, $A_{-1} = U$, $B_{-1}=V$ and $\alpha_{-1}=\alpha$.

  In round $i$, player $I$ plays a triple $(z_i,A_i,\alpha_i)$ and player $II$ responds with $B_i$.

  The players must obey the following rules (or otherwise lose immediately). 

  Player $I$ must choose $\alpha_i < \alpha_{i-1}$ and $z_i \in \set{x,y}$. 

  Furthermore, player $I$ and player $II$ must ensure that if
  $z_i=z_{i-1}$ then $A_i \pret A_{i-1}$ and $B_i\pret B_{i-1}$ and if
  $z_i \ne z_{i-1}$ then $A_i\pret B_{i-1}$ and $B_i\pret A_{i-1}$.

  Player $II$ must also ensure,
  setting $z_i^\dagger$ to be the unique element of $\set{x,y}\setminus \set{z_i}$,
  \[      \obar{B_i\cdot z_i^\dagger}  = \obar{A_i\cdot z_i}  \]
 \end{definition}

\begin{definition} Let $(x,U)$, $(y,V)\in X\times \mathcal T^*$  and $\alpha\in\On$, $0<\alpha$.
  Define a two-player (dynamic Ehrenfeucht-Fra\"iss\'e-style) game ${\mathcal G}^a(x,U,y,V,\alpha)$ as follows.
  
  Formally, for $i\in \omega$ define $z_{2i} = x$, $z^\dagger_{2i} =y $ and $z_{2i-1}=y$, $z^\dagger_{2i-1}=x$.
  Also, let $A_{-1} = U$, $B_{-1}=V$ and $\alpha_{-1}=\alpha$.

  In round $i$, player $I$ plays a pair $(A_i,\alpha_i)$ and player $II$ responds with $B_i$. 

  The players must obey the following rules (or otherwise lose immediately). 

  Player $I$ must choose $\alpha_i < \alpha_{i-1}$ and ensure that
  $A_i \pret B_{i-1}$. Player $II$ must ensure $B_i\pret A_{i-1}$ and
  that $ \obar{B_i\cdot z_i^\dagger} \subseteq \obar{A_i\cdot z_i} . $

 \end{definition}
\vskip12pt

\begin{lemma}\label{win_strats_presereved_at_limits} Let $(x,U)$, $(y,V)\in X\times \mathcal T^*$.
  If $\alpha\in\On$ is a limit ordinal then
  \begin{itemize}
  \item player $II$ has a winning strategy for ${\mathcal
    G}^s(x,U,y,V,\alpha)$ if and only if they have a winning strategy
    for ${\mathcal G}^s(x,U,y,V,\beta)$ for all $\beta\in
    (0,\alpha)$.\vskip6pt
    \item player $II$ has a winning strategy for ${\mathcal G}^a(x,U,y,V,\alpha)$ if and only if
    they have a winning strategy for ${\mathcal G}^a(x,U,y,V,\beta)$ for all $\beta\in (0,\alpha)$. 
  \end{itemize}

\end{lemma}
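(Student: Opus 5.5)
The plan is to prove the two items separately but by the same pattern; the key is that player $I$'s first move in a game with ordinal parameter $\alpha$ already commits him to some $\alpha_0<\alpha$. From that round on, the game looks like a game with a smaller ordinal.

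\emph{Forward direction.} Suppose $\sigma$ is a winning strategy for $II$ in ${\mathcal G}^s(x,U,y,V,\alpha)$ and fix $\beta\in(0,\alpha)$. Every legal play of ${\mathcal G}^s(x,U,y,V,\beta)$ is a legal play of ${\mathcal G}^s(x,U,y,V,\alpha)$. The rules on the $z_i$, $A_i$, $B_i$ are identical in the two games. The only rule mentioning the ordinal is $\alpha_0<\alpha_{-1}$, and $\alpha_0<\beta$ implies $\alpha_0<\alpha$; later moves only need $\alpha_i<\alpha_{i-1}$. Hence $\sigma$, restricted to such plays, is winning for $II$ in the $\beta$-game. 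The same argument works verbatim for ${\mathcal G}^a$. Limitness of $\alpha$ is not needed here, only $\beta\le\alpha$.

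\emph{Reverse direction.} For each $\beta\in(0,\alpha)$ choose a winning strategy $\sigma_\beta$ for $II$ in the $\beta$-game, using the axiom of choice over a set of ordinals. Define $\sigma$ for the $\alpha$-game as follows. When $I$ opens with $(z_0,A_0,\alpha_0)$ (resp.\ $(A_0,\alpha_0)$), we have $\alpha_0<\alpha$, and since $\alpha$ is a limit, $\beta:=\alpha_0+1<\alpha$ and $\beta>0$. Player $II$ then follows $\sigma_\beta$ for the whole remainder of the play. The opening move of $I$ is legal in the $\beta$-game because $\alpha_0<\beta$, and all constraints on sets are the same. Therefore every continuation consistent with $\sigma$ is a play of the $\beta$-game consistent with $\sigma_\beta$. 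That play is a win for $II$, because the winning condition, namely that $I$ breaks a rule first or that $II$ never violates the closure condition, is the same in both games. Any play in which $I$ breaks a rule is a loss for $I$ in both games.

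The only point needing care is that the ordinal rules only ever compare consecutive $\alpha_i$, apart from the initial $\alpha_{-1}$. So the games differ solely at round $0$, and no ordinal other than the first is affected by the switch of strategy. The limit hypothesis is used exactly to guarantee $\alpha_0+1<\alpha$. If one prefers to avoid choice, it suffices to note that $II$ may also just pick any $\beta$ with $\alpha_0<\beta<\alpha$, but some selection of strategies is still required. I expect no genuine obstacle beyond stating carefully that the games are open for $I$ (finite length, since the ordinals strictly decrease), so strategies compose without issue.
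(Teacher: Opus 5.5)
Your proposal is correct and is essentially the paper's argument. The paper's one-line proof is your reverse direction: $II$ answers $I$'s opening move $(z_0,A_0,\alpha_0)$ by switching to a winning strategy for the game with parameter $\alpha_0+1$, which is $<\alpha$ because $\alpha$ is a limit. Your forward direction (restricting a winning strategy from the $\alpha$-game to the $\beta$-game) and your explicit choice of the strategies $\sigma_\beta$ are details the paper leaves implicit, and both are handled correctly.
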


\begin{proof} For any first move  $(z_0,A_0,\alpha_0)$ by player $I$,
  player $II$ can respond by playing their winning strategy for
  ${\mathcal G}^s(x,U,y,V,\alpha_0+1)$ (respectively ${\mathcal
    G}^a(x,U,y,V,\alpha_0+1)$), since $\alpha$ is a limit ordinal and
  so $\alpha_0<\alpha$ implies $\alpha_0+1<\alpha$.
\end{proof}

\begin{proposition}\label{sim_iff_ws} Let $(x,U)$, $(y,V)\in X\times \mathcal T^*$. Suppose $(x,U)\sim_0 (y,V) $.
  Then for all $\alpha\in \On$ with $0<\alpha$ we have  $(x,U)\sim_\alpha (y,V) $
 if and only if  Player $II$ has a winning strategy for ${\mathcal  G}^s(x,U,y,V,\alpha)$.
\end{proposition}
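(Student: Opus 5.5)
The plan is to prove, by induction on $\alpha$, a slightly stronger statement covering $\alpha=0$ as well. For $\alpha=0$ say that $W(x,U,y,V,0)$ holds if $(x,U)\sim_0(y,V)$. For $\alpha>0$ say that $W(x,U,y,V,\alpha)$ holds if $(x,U)\sim_0(y,V)$ and player $II$ has a winning strategy for ${\mathcal G}^s(x,U,y,V,\alpha)$. I will show $W(x,U,y,V,\alpha)$ if and only if $(x,U)\sim_\alpha(y,V)$, for all pairs and all $\alpha$; the proposition is the case $\alpha>0$.

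Two preliminary observations are needed.

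\emph{Unfolding of $\sim_\alpha$.} For every $\alpha$, $(x,U)\sim_\alpha(y,V)$ holds if and only if $(x,U)\sim_0(y,V)$ and, for every $\beta<\alpha$, both of the following hold:
\begin{itemize}
\item for all $U'\pret U$ there is $V'\pret V$ with $(y,V')\sim_\beta(x,U')$;
\item for all $V'\pret V$ there is $U'\pret U$ with $(x,U')\sim_\beta(y,V')$.
\end{itemize}
This follows by an auxiliary induction from the definition together with part (iv) of the preceding lemma (monotonicity in the index). The extra conjunct $(x,U)\sim_\alpha(y,V)$ in the successor clause is absorbed by the inductive hypothesis applied to $\alpha$ itself.

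\emph{Unfolding of the game.} After player $I$ plays $(z_0,A_0,\alpha_0)$ and player $II$ answers $B_0$ legally, the remaining play is exactly a play of ${\mathcal G}^s(z_0,A_0,z_0^\dagger,B_0,\alpha_0)$. The reason is that the rules for later rounds only refer to $z_{i-1},A_{i-1},B_{i-1},\alpha_{i-1}$, and the "switch" rule is symmetric in the two points. When $\alpha_0=0$, player $I$ has no legal continuation, so $II$ has won. Hence player $II$ wins ${\mathcal G}^s(x,U,y,V,\alpha)$ if and only if for every $\beta<\alpha$ the following two conditions hold:
\begin{itemize}
\item for every $A\pret U$ there is $B\pret V$ with $W(x,A,y,B,\beta)$;
\item for every $A\pret V$ there is $B\pret U$ with $W(y,A,x,B,\beta)$.
\end{itemize}
Here the closure equality demanded of $II$ is exactly $\sim_0$ of the new pair. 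The right-to-left direction uses choice to assemble the sub-strategies into a single strategy.

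Granting these, the induction is straightforward. For $\alpha=0$ there is nothing to prove. For $\alpha>0$, compare the two unfoldings termwise: by the inductive hypothesis, $W(\cdot,\beta)$ is equivalent to $\sim_\beta$ for each $\beta<\alpha$. Symmetry of $\sim_\beta$, which is built into its definition, lets me rewrite $W(y,A,x,B,\beta)$ as $(x,B)\sim_\beta(y,A)$, matching the "back" clause. The conjunct $(x,U)\sim_0(y,V)$ appears on both sides. The limit case can alternatively be handled with Lemma (\ref{win_strats_presereved_at_limits}), but the termwise comparison already covers it uniformly.

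I expect the main obstacle to be the game unfolding. One must check carefully that the residual game after the first round really is ${\mathcal G}^s$ with the new parameters, including when player $I$ switches sides. One must also handle the boundary convention when $\alpha_0=0$: player $I$ cannot move, and $II$ should be counted as winning, or equivalently only the $\sim_0$ condition on the first response is required. I would first fix the convention that a play ends with a win for $II$ when player $I$ has no legal move. Then I would verify the residual-game identification directly from the rules, splitting on whether $z_1=z_0$.
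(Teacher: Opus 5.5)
Your proof is correct, but it is organized differently from the paper's. The paper proves the two directions separately. For $(x,U)\sim_\alpha(y,V)\Rightarrow$ winning strategy it uses no induction: it directly describes a strategy for player $II$ that keeps the current positions $\sim_{\alpha_j}$-equivalent, i.e.\ $(z_j,A_j)\sim_{\alpha_j}(z^\dagger_j,B_j)$ after each round. To do this it passes from $\sim_{\alpha_{j-1}}$ to $\sim_{\alpha_j+1}$ by monotonicity and then applies the back-and-forth clauses. Only the converse is done by induction. Its successor step looks at first moves $(x,U',\alpha)$ and $(y,V',\alpha)$ and passes to the residual game, and its limit step is separate, via Lemma (\ref{win_strats_presereved_at_limits}).

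You instead run a single induction that proves both directions at once. You match one-step unfoldings of $\sim_\alpha$ and of ${\mathcal G}^s$, letting player $I$'s first ordinal range over all $\beta<\alpha$. This treats successors and limits uniformly, and your forward direction comes from gluing residual strategies with choice rather than from an explicit invariant.

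The paper's route buys a concrete description of $II$'s winning strategy. Yours buys completeness. The definition of $\sim_{\alpha+1}$ also demands $(x,U)\sim_\alpha(y,V)$, and the paper's successor step, which only considers first moves with ordinal exactly $\alpha$, never verifies this conjunct. It would need the extra remark that a winning strategy for the $(\alpha+1)$-game is also one for the $\alpha$-game, plus the $\sim_0$ hypothesis when $\alpha=0$. Your unfolding of $\sim_\alpha$ absorbs this conjunct automatically.

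Two small remarks. First, symmetry of $\sim_\beta$ is not literally built into the definition; it follows by an easy induction, since exchanging the two pairs swaps the forth and back clauses. Second, your unfolding of $\sim_\alpha$ does not actually need part (iv) of the lemma; the auxiliary induction goes through from the definition alone.
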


\begin{proof} Let $\alpha\in \On$ with $0<\alpha$.
  
Suppose  $(x,U)\sim_\alpha (y,V) $. We define by induction on $\omega$ a winning strategy
$\sigma=\tupof{\sigma_i}{i\in \omega}$ for $II$ such that
for each $j\in(0, \omega)$ if for all $i<j$ player $I$ has played $(z_i,A_i,\alpha_i)$ and player II has followed $\sigma_i$ then
$(z_{j-1},A_{j-1})\sim_{\alpha_{j-1}} (z^\dagger_{j-1},\sigma_{j-1}( \tupof{(z_i,A_i,\alpha_i)}{i< j}))$.

Suppose we have defined $\sigma_i$ for $i<j$ and now wish to define $\sigma_j$. Suppose player $I$ plays $(z_j,A_j,\alpha_j)$.
We have $\alpha_j < \alpha_{j-1}$ so $(z_{j-1},A_{j-1})\sim_{\alpha_{j}+1} (z^\dagger_{j-1},B_{j-1})$.

If $z_j = z_{j-1}$ then $A_j \pret A_{j-1}$ and by the definition of $\sim_{\alpha_j +1}$ there is some $B\pret B_{j-1}$ such that
$(z_{j},A_{j})\sim_{\alpha_{j}} (z^\dagger_{j},B)$. Set $\sigma_{j}( \tupof{(z_i,A_i,\alpha_i)}{i\le j}) = B$.

If $z_j \ne z_{j-1}$ then $A_j \pret B_{j-1}$ and by the definition of $\sim_{\alpha_j +1}$ there is some $B\pret A_{j-1}$ such that
$(z_{j},A_{j})\sim_{\alpha_{j}} (z^\dagger_{j},B)$. Set $\sigma_{j}( \tupof{(z_i,A_i,\alpha_i)}{i\le j}) = B$. 

This is, clearly, a winning strategy for $II$.

Conversely, suppose $\sigma=\tupof{\sigma_i}{i\in \omega}$ is a winning strategy for player $II$. We work by induction on the ordinals.
The inductive hypothesis for $\alpha\in\On$ is that the proposition holds for all $\beta\in (0,\alpha)$, that is for all
$(x,U)$, $(y,V)\in X\times \mathcal T^*$ we have $(x,W)\sim_\beta (y,Z) $
if and only if  Player $II$ has a winning strategy for ${\mathcal  G}^s(x,W,y,Z,\beta)$.

We address the case of successor $\alpha+1$. Suppose $U'\pret U$. Consider the play of the game in which the first move for player $I$ is
$(x,U',\alpha)$. Let $\sigma_0(\tup{(x,U',\alpha)})=B$, where $B\pret V$. Since $\sigma$ is a winning strategy for player $II$ in
${\mathcal  G}^s(x,U,y,V,\alpha+1)$ and $\sigma_0(\tup{(x,U',\alpha)})=B$
we have that $\tupof{\sigma_i}{0<i<\alpha+1}$ is a winning strategy for player $II$ in ${\mathcal  G}^s(x,U',y,B,\alpha)$.
Thus, by the inductive hypothesis, $(x,U')\sim_\alpha (y,B)$. Similarly, if $V'\pret V$, consider the play of the game in which the first move for
player $I$ is $(y,V')$. Let $\sigma_0(\tup{(y,V',\alpha)})=B$, where this time $B\pret U$. Again, since $\sigma$ is a winning strategy for player $II$ in
${\mathcal  G}^s(x,U,y,V,\alpha+1)$ and $\sigma_0(\tup{(y,V',\alpha)})=B$
we have that $\tupof{\sigma_i}{0<i<\alpha+1}$ is a winning strategy for player $II$ in ${\mathcal  G}^s(y,B,x,V,\alpha)$.
Thus, by the inductive hypothesis, $(y,V')\sim_\alpha (x,B)$. Hence $(x,U)\sim_{\alpha+1} (y,V)$.
(Note that this argument also covers the base case where $\alpha+1 =1$.)

By Lemma (\ref{win_strats_presereved_at_limits}), if $\alpha$ is a
limit ordinal, player $II$ has a a winning strategy in ${\mathcal
  G}^s(x,U,y,V,\alpha)$ if and only if for all $\beta\in (0,\alpha)$
player $II$ has a winning strategy in ${\mathcal
  G}^s(x,U,y,V,\beta)$. By the inductive hypothesis the latter is so
if and only if for all $\beta\in (0,\alpha)$ we have $(x,U)\sim_\beta
(y,V)$. By definition, this gives $(x,U)\sim_\alpha (y,V)$.
\end{proof}

\begin{proposition}\label{le_iff_ws} Let $(x,U)$, $(y,V)\in X\times \mathcal T^*$. Suppose $(x,U)\le_0 (y,V) $.
  Then for all $\alpha\in \On$ with $0<\alpha$ we have  $(x,U)\le_\alpha (y,V) $
 if and only if  Player $II$ has a winning strategy for ${\mathcal  G}^a(x,U,y,V,\alpha)$.
\end{proposition}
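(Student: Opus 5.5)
The plan is to mirror the proof of Proposition (\ref{sim_iff_ws}) for the asymmetric relations $\le_\alpha$ and the game ${\mathcal G}^a$. The key observation is that the game alternates sides automatically. Round $0$ concerns $z_0=x$, so player $I$ picks $A_0\pret V$ (which is $B_{-1}$) and player $II$ answers with $B_0\pret U$ (which is $A_{-1}$). This is exactly the shape of the clause $\forall U'\pret U\sss\exists V'\pret V\sss (y,V')\le_\alpha (x,U')$, except that the roles of the two open sets are read off the unfolding with the points swapped at each level. So before starting I will write out carefully which point is attached to which open set at each round. The invariant to maintain is the following: after round $j$, the pair played on the $z_j^\dagger$-side is $\le_{\alpha_j}$ the pair played on the $z_j$-side, or the reverse. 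The direction is to be fixed by matching the unfolding of $\le_{\alpha_j+1}$.

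For the forward direction, assume $(x,U)\le_\alpha (y,V)$. I define a strategy for $II$ by induction on rounds, maintaining the invariant that the current position (the last $A$, the last $B$, and the current point assignment) is related by $\le_{\alpha_{j}}$ in the appropriate orientation. When $I$ plays $(A_{j+1},\alpha_{j+1})$ with $\alpha_{j+1}<\alpha_j$, I first use Lemma (\ref{le_forms_chain})(iii) to pass from $\le_{\alpha_j}$ to $\le_{\alpha_{j+1}+1}$. Unfolding the successor clause then yields the response $B_{j+1}$, an open subset of the previous $A_j$, with the relation flipped and at level $\alpha_{j+1}$. The flip is exactly what the alternation of $z_i$ in ${\mathcal G}^a$ requires.

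The legality condition $\obar{B_i\cdot z_i^\dagger}\subseteq\obar{A_i\cdot z_i}$ has to come from the level-$0$ consequence of the invariant, again via Lemma (\ref{le_forms_chain})(iii) down to $\le_0$, which is closure inclusion. The hypothesis $(x,U)\le_0(y,V)$ covers the degenerate start. Since the ordinals strictly decrease, every play is finite, and $II$ never makes an illegal move, so the strategy wins.

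For the converse I argue by induction on $\alpha>0$, as in Proposition (\ref{sim_iff_ws}). For the limit case I use Lemma (\ref{win_strats_presereved_at_limits}) together with the limit clause of $\le_\alpha$. For the successor case $\alpha+1$, let $U'\pret U$ be given, and let $I$ open with $(U',\alpha)$, after checking that this is a legal first move under the orientation conventions. Let $B=\sigma_0$ be $II$'s reply. The tail of $\sigma$ is then a winning strategy in ${\mathcal G}^a$ started from the swapped position at level $\alpha$, so the inductive hypothesis gives $(y,B)\le_\alpha (x,U')$, which is what the definition of $\le_{\alpha+1}$ requires.

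The induction hypothesis needs its own hypothesis $\le_0$ for the swapped position, and this is supplied by II's legality condition in round $0$. The base case $\alpha=1$ reduces to $\le_0$ directly, by the same legality condition. The step I expect to be the main obstacle is the bookkeeping of orientation. Because ${\mathcal G}^a$ fixes $z_{2i}=x$ and $z_{2i-1}=y$, I must check that the tail of the game, re-indexed, really is an instance ${\mathcal G}^a(y,\cdot,x,\cdot,\alpha)$ with the roles of $A$ and $B$ and the direction of the closure inclusion all consistent. If the conventions as literally stated are off by one, I would first try reindexing the game so that the tail game matches, rather than changing the invariant.
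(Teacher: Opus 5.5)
Your proposal is correct and follows the paper's proof essentially step for step. The forward direction maintains $(z_j^\dagger,B_j)\le_{\alpha_j}(z_j,A_j)$ using Lemma (\ref{le_forms_chain})(iii) and the successor clause, and the converse is an induction on $\alpha$ in which $I$ opens with $(U',\alpha)$, the tail of $\sigma$ is fed to the induction hypothesis, and limits go through Lemma (\ref{win_strats_presereved_at_limits}). Your suspicion about the conventions is justified: as literally written, round $0$ forces $A_0\pret V$, whereas the paper's proof silently reads the game with $A_{-1}=V$, $B_{-1}=U$ (matching $z_{-1}=y$), so that $I$ plays $U'\pret U$ and $II$ replies $B\pret V$; your observation that $II$'s round-$0$ legality condition supplies the $\le_0$ hypothesis for the tail game, and settles the case $\alpha+1=1$, is a detail the paper leaves implicit.
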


\begin{proof} The proof is very similar to the proof of Proposition (\ref{sim_iff_ws}).

Let $\alpha\in \On$ with $0<\alpha$.
  
Suppose  $(x,U)\le_\alpha (y,V) $. We define by induction on $\omega$ a winning strategy
$\sigma=\tupof{\sigma_i}{i\in \omega}$ for $II$ such that
for each $j\in(0, \omega)$ if for all $i<j$ player $I$ has played $(z_i,A_i,\alpha_i)$ and player II has followed $\sigma_i$ then
\[ (z^\dagger_{j-1},\sigma_{j-1}( \tupof{(z_i,A_i,\alpha_i)}{i< j})) \le_{\alpha_{j-1}} (z_{j-1},A_{j-1}) .\]

Suppose we have defined $\sigma_i$ for $i<j$ and now wish to define $\sigma_j$. Suppose player $I$ plays $(z_j,A_j,\alpha_j)$.
We have $\alpha_j < \alpha_{j-1}$ so $(z^\dagger_{j-1},B_{j-1}) \le_{\alpha_{j}+1} (z_{j-1},A_{j-1})$.

By the definition of $\le_{\alpha_j +1}$ there is some $B\pret A_{j-1}$ such that
$(z^\dagger_{j},B) \le_{\alpha_{j}} (z_{j},A_{j}) $. Set $\sigma_{j}( \tupof{(z_i,A_i,\alpha_i)}{i\le j}) = B$. 

This is, clearly, a winning strategy for $II$.

Conversely, suppose $\sigma=\tupof{\sigma_i}{i\in \omega}$ is a
winning strategy for player $II$. We work by induction on the
ordinals.  The inductive hypothesis for $\alpha\in\On$ is that the
proposition holds for all $\beta\in (0,\alpha)$, that is for all
$(x,U)$, $(y,V)\in X\times \mathcal T^*$ we have $(x,W)\le_\beta (y,Z)
$ if and only if Player $II$ has a winning strategy for ${\mathcal
  G}^a(x,W,y,Z,\beta)$.

We address the case of successor $\alpha+1$. Suppose $U'\pret U$. Consider the play of the game in which the first move for player $I$ is
$(x,U',\alpha)$. Let $\sigma_0(\tup{(x,U',\alpha)})=B$, where $B\pret V$. Since $\sigma$ is a winning strategy for player $II$ in
${\mathcal  G}^s(x,U,y,V,\alpha+1)$ and $\sigma_0(\tup{(x,U',\alpha)})=B$
we have that $\tupof{\sigma_i}{0<i<\alpha+1}$ is a winning strategy for player $II$ in ${\mathcal  G}^s(y,B,x,U',\alpha)$.
Thus, by the inductive hypothesis, $(y,B)\le_\alpha (x,U')$. Hence $(x,U)\le_{\alpha+1} (y,V)$.
(Note that this argument also covers the base case where $\alpha+1 =1$.)

By Lemma (\ref{win_strats_presereved_at_limits}), if $\alpha$ is a limit ordinal, player $II$ has a a winning strategy in
${\mathcal G}^s(x,U,y,V,\alpha)$ if and only if for all $\beta\in (0,\alpha)$ player $II$ has a winning strategy in ${\mathcal G}^s(x,U,y,V,\beta)$. By the inductive hypothesis
the latter is so if and only if for all $\beta\in (0,\alpha)$ we have $(x,U)\sim_\beta (y,V)$. By definition, this gives $(x,U)\sim_\alpha (y,V)$.

\end{proof}

  The proofs of Proposition (\ref{sim_iff_ws}) and Proposition
  (\ref{le_iff_ws}) are similar to the proof of \cite{Vaananen},
  Proposition (7.17), on substituting the notions of game and
  $\sim_\alpha$ defined here for those of a game and $\simeq^\alpha_p$
  in \cite{Vaananen}. However there are differences arising from
  the fact that we argue with the relations $\sim_\alpha$ and
  $\le_\alpha$ directly rather than defining back-and-forth sequences
  as V\"a\"an\"anen does.

    One could define a notion of back-and-forth sequence here. The
    constituents of the sets in the sequence would be functions with
    domains a decreasing sequence of elements of $\mathcal
    T^*$. However, it was not clear how to prove analogues of
    \cite{Vaananen}, Proposition (7.17), without insisting that the
    function $\tup{(U,V)}$ was an element of each set in the
    back-and-forth sequence, and this led to proofs similar to those
    of Proposition (\ref{sim_iff_ws}) and Proposition
    (\ref{le_iff_ws}) with additional notational baggage and no
    further ideas.

    \begin{corollary}  Let $(x,U)$, $(y,V)\in X\times \mathcal T^*$. Suppose $(x,U)\approx_0 (y,V) $.
  Then for all $\alpha\in \On$ with $0<\alpha$ we have  $(x,U)\approx_\alpha (y,V) $
 if and only if  Player $II$ has a winning strategy for ${\mathcal  G}^a(x,U,y,V,\alpha)$ and a winning strategy in ${\mathcal  G}^a(y,V,x,U,\alpha)$.
    \end{corollary}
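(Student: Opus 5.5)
The plan is to unfold $\approx_\alpha$ into its two asymmetric halves and apply Proposition (\ref{le_iff_ws}) to each. By definition, $(x,U)\approx_\alpha (y,V)$ holds exactly when $(x,U)\le_\alpha (y,V)$ and $(y,V)\le_\alpha (x,U)$. The hypothesis $(x,U)\approx_0 (y,V)$ likewise gives both $(x,U)\le_0 (y,V)$ and $(y,V)\le_0 (x,U)$, i.e.\ $\obar{U\cdot x} = \obar{V\cdot y}$.

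First, since $(x,U)\le_0 (y,V)$, Proposition (\ref{le_iff_ws}) applied to the ordered pair $((x,U),(y,V))$ shows that, for every $\alpha>0$, $(x,U)\le_\alpha (y,V)$ holds if and only if player $II$ has a winning strategy in ${\mathcal G}^a(x,U,y,V,\alpha)$.

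Second, since $(y,V)\le_0 (x,U)$, the same proposition applied to the swapped pair $((y,V),(x,U))$ gives that $(y,V)\le_\alpha (x,U)$ holds if and only if player $II$ has a winning strategy in ${\mathcal G}^a(y,V,x,U,\alpha)$.

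Taking the conjunction of these two equivalences yields the corollary directly. The only care needed is bookkeeping: the hypothesis of Proposition (\ref{le_iff_ws}) must be verified separately for each ordered pair, and the symmetrized hypothesis $\approx_0$ supplies both. I expect no real obstacle beyond this, as the argument only takes conjunctions of the two instances; in particular no transitivity or monotonicity from Lemma (\ref{le_forms_chain}) is needed.
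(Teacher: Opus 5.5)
Your argument is correct and is the intended one: the paper states this corollary without a separate proof, as an immediate consequence of Proposition (\ref{le_iff_ws}) applied once to $((x,U),(y,V))$ and once to $((y,V),(x,U))$. In each application $(x,U)\approx_0 (y,V)$ supplies the needed $\le_0$ hypothesis, and then the two equivalences are conjoined, exactly as you do.
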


\begin{definition} Let $(x,U)$, $(y,V)\in X\times \mathcal T^*$. Suppose $(x,U)\approx_0 (y,V) $.
  Then for all $\alpha\in \On$ with $0<\alpha$ let ${\mathcal  G}^z(x,U,y,V,\alpha)$ be the game where
  player $I$ first chooses $(x,U)$ or $(y,V)$ and then the players play a run of ${\mathcal  G}^a(x,U,y,V,\alpha)$
  if player $I$ chose $(x,U)$ and play a run of ${\mathcal  G}^a(y,V,x,U,\alpha)$ if player $I$ chose $(y,V)$.
\end{definition}

Loosely speaking, ${\mathcal  G}^z(x,U,y,V,\alpha)$ is a coproduct of the games ${\mathcal  G}^a(x,U,y,V,\alpha)$
and ${\mathcal  G}^a(y,V,x,U,\alpha)$.

    \begin{corollary}  Let $(x,U)$, $(y,V)\in X\times \mathcal T^*$. Suppose $(x,U)\approx_0 (y,V) $.
  We have  $(x,U)\approx_\alpha (y,V) $
  if and only if  Player $II$ has a winning strategy for ${\mathcal  G}^z(x,U,y,V,\alpha)$.
    \end{corollary}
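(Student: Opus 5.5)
The plan is to reduce the statement to the previous corollary, which says that, given $(x,U)\approx_0 (y,V)$, we have $(x,U)\approx_\alpha (y,V)$ if and only if player $II$ has winning strategies in both ${\mathcal G}^a(x,U,y,V,\alpha)$ and ${\mathcal G}^a(y,V,x,U,\alpha)$. What remains is to show that player $II$ has a winning strategy in the coproduct game ${\mathcal G}^z(x,U,y,V,\alpha)$ exactly when player $II$ has a winning strategy in each of these two component games.

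For the direction from the component games to ${\mathcal G}^z$: suppose $\sigma$ wins ${\mathcal G}^a(x,U,y,V,\alpha)$ and $\tau$ wins ${\mathcal G}^a(y,V,x,U,\alpha)$. Player $II$ plays in ${\mathcal G}^z(x,U,y,V,\alpha)$ as follows. After player $I$'s opening choice, $II$ follows $\sigma$ if $I$ chose $(x,U)$ and $\tau$ if $I$ chose $(y,V)$. The opening choice imposes no obligation on $II$. The remainder of any play of ${\mathcal G}^z$ is literally a play of the selected component game, with the same rules and the same winning condition, so the combined strategy is winning.

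For the converse: let $\rho$ be a winning strategy for $II$ in ${\mathcal G}^z(x,U,y,V,\alpha)$. Define $\sigma$ by feeding $\rho$ the opening choice $(x,U)$ followed by $I$'s moves in ${\mathcal G}^a(x,U,y,V,\alpha)$. Define $\tau$ the same way, with opening choice $(y,V)$. Each play of ${\mathcal G}^a$ according to $\sigma$ (respectively $\tau$) is the tail of a play of ${\mathcal G}^z$ according to $\rho$, and so $II$ does not lose it. Combining this with the previous corollary gives the result.

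The only step needing care is that $\alpha>0$ is needed for the games to be defined. The case $\alpha=0$ is covered by the hypothesis $(x,U)\approx_0(y,V)$, if one adopts the natural convention for $0$-length games. I would either restrict the statement to $0<\alpha$, as in the preceding corollary, or note this degenerate case explicitly. There is no real obstacle: the content is entirely in the previous corollary, and hence in Proposition (\ref{le_iff_ws}).
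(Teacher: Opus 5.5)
Your proposal is correct and is exactly the intended argument: the paper gives no separate proof, but states this corollary immediately after calling ${\mathcal G}^z(x,U,y,V,\alpha)$ a coproduct of ${\mathcal G}^a(x,U,y,V,\alpha)$ and ${\mathcal G}^a(y,V,x,U,\alpha)$. It is meant to follow from the preceding corollary in just the way you describe, by combining or splitting strategies according to player $I$'s opening choice. Your remark that the statement tacitly needs $0<\alpha$ (or a vacuous-win convention at $\alpha=0$) is also apt, since ${\mathcal G}^z$ is only defined for $0<\alpha$.
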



\section{Hjorth analysis, a Karp-style theorem}\label{Hjorth_Karp_thm}

  We will now define functions similar to those defined in \S{}\ref{Scott_analysis_wo_fmlae} and prove similar results.
  
      \begin{definition} For $(x,U)\in X\times \mathcal T^*$ set $\dom(I_{x,U})=1$ and $I_{x,U}(0) = \obar{U\cdot x}$. 
      \end{definition}
      \begin{definition}
          We inductively define functions $F^\alpha_{x,U}$, $G^{\alpha}_{x,y,U}$ and $H^\alpha_{x,U}$.
  For $\beta<\alpha$ we will have
  \[ F^{\beta}_{x,U} \subseteq F^{\alpha}_{x,U} ,\sss\sss H^{\beta}_{x,U} \subseteq H^{\alpha}_{x,U}  \hbox{ and }
  G^{\beta}_{x,y,U}\subseteq G^{\alpha}_{x,y,U}  .\]

  Let
  \[ F^0_{x,U} = G^0_{x,y,U} = H^0_{x,U} = I_{x,U}. \]
  
  Elements of the domain of $F^{\alpha}_{x,U}$ will either be elements
  of $1$ or a pair whose second entry is an ordinal less than
  $\alpha$.  Element of the domains of the $G^\alpha_{x,y,U}$ and
  $H^{\alpha}_{x,U}$ wither be elements of $1$ or triples whose last
  entry is an ordinal less than $\alpha$.  For $0<\beta<\alpha$ we
  will have
  \[ F^{\alpha}_{x,U}\on \beta = F^{\beta}_{x,U},\sss\sss H^{\alpha}_{x,U}\on \beta = H^{\beta}_{x,U}
                        \hbox{ and }  G^{\alpha}_{x,y,U}\on \beta = G^{\beta}_{x,y,U} .\]
    Thus, in each case, for limit $\alpha$ the functions are
    completely determined by the definitions for
    $\beta<\alpha$. Furthermore, for $\alpha+1$ and tuples in their
    domain with last element $\beta<\alpha$ the functions are
    determined by the definitions for $\alpha$.

    Finally, we treat for $\alpha+1$ tuples in the functions domains with last element $\alpha$.

    We let $H^{\alpha+1}_{x,U}(y,V,\alpha) = 1$ if there is some
    $U'\in \mathcal T^*$ with $U'\subseteq U$ such that
    $H^{\alpha}_{x,U'} = H^{\alpha}_{y,V}$, and $=0$ otherwise.

    Similarly, we let $F^{\alpha+1}_{x,U}(F^\alpha_{y,V},\alpha) = 1$
    if there is some $U'\in \mathcal T^*$ with $U'\subseteq U$ such
    that $F^{\alpha}_{x,U'} = F^{\alpha}_{y,V}$, and $=0$ otherwise.

    Elements of the domain of $G^{\alpha+1}_{x,y,U}$ with last element
    $\alpha$ are triples of the form $(z,W,\alpha)$ with $z \in
    \set{x,y}$. Set $z^\dagger$ to be the unique element of $\set{x,y}
    \setminus \set{z}$. $G^{\alpha+1}_{x,y,U} = 1$ if there is some
    $U'\in \mathcal T^*$ with $U'\subseteq U$ such that
    $G^\alpha_{x,y,U'} = G^\alpha_{z,z^\dagger,W}$.
      \end{definition}

      Similarly to \S{}\ref{Scott_analysis_wo_fmlae} we have the
      following two lemmas and proposition. The proofs are essentially
      identical to those of Lemma (\ref{reln_Fs_and_Hs}), Lemma
      (\ref{lemma_self_classical_H}) and Proposition
      (\ref{prop_funs_agree_equiv_sim_classical_FH}) with only the base
      case of the induction differing.
      
\begin{lemma}\label{reln_Fs_and_Hs_Hjorth} Let $(x,U)$, $(y,V)$, $(z,W)\in X\times\mathcal T^*$ and $\beta<\alpha\in\On$.
       Then
       \[ F^{\alpha}_{x,U}(F^\alpha_{y,V},\alpha) = H^{\alpha}_{x,U}(y,V,\alpha) .\]
  Moreover, 
  \[ F^{\alpha}_{x,U} = F^\alpha_{z,W} \hbox{ if and only if } H^{\alpha}_{x,U} = H^{\alpha}_{z,W}.\]
\end{lemma}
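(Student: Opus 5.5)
We prove the lemma by induction on $\alpha$, proving the two assertions together, as in Lemma (\ref{reln_Fs_and_Hs}). The first display should be read as intended: for $\beta<\alpha$,
\[ F^{\alpha}_{x,U}(F^\beta_{y,V},\beta) = H^{\alpha}_{x,U}(y,V,\beta). \]
With $\alpha$ in place of $\beta$ as literally printed, neither side is in the domain of the function. For $\alpha$ the induction hypothesis is that both the first clause (for all $\beta'<\alpha'$) and the ``Moreover'' clause hold at every $\alpha'<\alpha$, for all pairs in $X\times\mathcal T^*$.

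The base case $\alpha=0$ has no $\beta<0$, so only the ``Moreover'' part is needed. It is trivial, since $F^0_{x,U}=H^0_{x,U}=I_{x,U}$ and likewise for $(z,W)$.

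For limit $\alpha$, every $\beta<\alpha$ satisfies $\beta+1<\alpha$. By the restriction clauses of the definition, $F^\alpha_{x,U}$ agrees with $F^{\beta+1}_{x,U}$ on arguments with last entry $\beta$, and similarly for $H$. So the first clause follows from the induction hypothesis at $\beta+1$. For the ``Moreover'' clause, $F^\alpha_{x,U}=F^\alpha_{z,W}$ holds if and only if $F^\beta_{x,U}=F^\beta_{z,W}$ for all $\beta<\alpha$, since these functions are unions of increasing chains. By induction this is equivalent to the same statement for the $H$s, and hence to $H^\alpha_{x,U}=H^\alpha_{z,W}$.

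For the successor step to $\alpha+1$, arguments with last entry $\beta<\alpha$ are handled by the hypothesis at $\alpha$, again using the restriction clauses. For last entry $\alpha$, the definitions give the chain
\[
F^{\alpha+1}_{x,U}(F^\alpha_{y,V},\alpha)=1 \iff \exists U'\pret U\ F^\alpha_{x,U'}=F^\alpha_{y,V} \iff \exists U'\pret U\ H^\alpha_{x,U'}=H^\alpha_{y,V} \iff H^{\alpha+1}_{x,U}(y,V,\alpha)=1,
\]
where the middle equivalence is the ``Moreover'' clause at $\alpha$ applied to the pairs $(x,U')$ and $(y,V)$. Since both functions take values in $\set{0,1}$, the values agree.

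The ``Moreover'' clause at $\alpha+1$ then follows, and this is the one step needing care. The domains of $F^{\alpha+1}_{x,U}$ and $H^{\alpha+1}_{x,U}$ are parametrized differently: one uses values $F^\alpha_{y,V}$, the other uses pairs $(y,V)$. The map $(y,V,\alpha)\mapsto(F^\alpha_{y,V},\alpha)$ sends the top layer of the $H$-domain onto the top layer of the $F$-domain. By the first clause just proved, it carries $H^{\alpha+1}_{x,U}$-values to $F^{\alpha+1}_{x,U}$-values, and likewise for $(z,W)$. Two pairs $(y,V)$ and $(y',V')$ with the same $F^\alpha$ give the same $H^{\alpha+1}$-values, by the ``Moreover'' clause at $\alpha$, so this is consistent. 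Hence the top layers of $F^{\alpha+1}_{x,U}$ and $F^{\alpha+1}_{z,W}$ agree if and only if those of the $H$s agree. The lower layers are $F^\alpha$ and $H^\alpha$, which are handled by the induction hypothesis. This completes the induction.
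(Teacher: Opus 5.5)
Your proof is correct and takes the paper's route: the paper proves this lemma by the same induction on $\alpha$ as Lemma (\ref{reln_Fs_and_Hs}), with the successor step resting on the ``Moreover'' clause at $\alpha$ applied to $(x,U')$ and $(y,V)$, exactly as in your chain of equivalences. Your reading of the first display with $\beta$ in place of $\alpha$ is the intended one. Your explicit treatment of the ``Moreover'' clause at limits and at $\alpha+1$ spells out what the paper calls immediate.
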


      \begin{lemma}\label{lemma_self_Hjorth} Let $x$, $y \in X$, $U$, $U'\in \mathcal T^*$ with $U'\subseteq U$ and let
        $0<\beta<\alpha\in\On$
        \[ F^{\alpha}_{x,U}(F^{\beta}_{x,U'},\beta) = G^{\alpha}_{x,y,U}(x,U',\beta) = H^{\alpha}_{x,U}(x,U',\beta) =1 .\]
\end{lemma}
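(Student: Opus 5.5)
We prove Lemma (\ref{lemma_self_Hjorth}) as we proved Lemma (\ref{lemma_self_classical_H}): reduce to the successor case, verify it with the obvious witness, and get the $F$-clause from Lemma (\ref{reln_Fs_and_Hs_Hjorth}).

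\emph{Reduction to $\alpha=\beta+1$.} By the definition, for $\beta<\alpha$ we have $H^{\beta+1}_{x,U}\subseteq H^{\alpha}_{x,U}$, and the value at a triple with last entry $\beta$ is fixed at stage $\beta+1$. The same holds for the $F$s and $G$s. So it suffices to treat $\alpha=\beta+1$.

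\emph{The $H$ clause.} By definition, $H^{\beta+1}_{x,U}(x,U',\beta)=1$ iff there is $U''\in\mathcal T^*$ with $U''\subseteq U$ and $H^\beta_{x,U''}=H^\beta_{x,U'}$. Since $U'\subseteq U$ and $U'\in\mathcal T^*$, the witness $U''=U'$ works, so the value is $1$.

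\emph{The $G$ clause.} Here $z=x$, so $z^\dagger=y$. We have $G^{\beta+1}_{x,y,U}(x,U',\beta)=1$ iff some $U''\pret U$ satisfies $G^\beta_{x,y,U''}=G^\beta_{x,y,U'}$. Again $U''=U'$ witnesses this.

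\emph{The $F$ clause.} One option is to argue directly: $F^{\beta+1}_{x,U}(F^\beta_{x,U'},\beta)=1$ is witnessed by $U'$ itself. Equivalently, we can invoke Lemma (\ref{reln_Fs_and_Hs_Hjorth}), which identifies $F^{\alpha}_{x,U}(F^\beta_{x,U'},\beta)$ with $H^{\alpha}_{x,U}(x,U',\beta)$. Its displayed indices read $\alpha$ where $\beta$ is intended, as in Lemma (\ref{reln_Fs_and_Hs}).

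\emph{Main obstacle.} The only point needing care is bookkeeping rather than mathematics. We must check that $(F^\beta_{x,U'},\beta)$ and $(x,U',\beta)$ actually lie in the relevant domains, which are built with the last entry less than $\alpha$. This is where the hypothesis $U'\subseteq U$ with $U'$ nonempty open matters: it makes $U'$ an admissible witness. We must also check that the restriction conventions really make the value at stage $\alpha$ equal to the value at stage $\beta+1$.
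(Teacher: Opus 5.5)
Your proposal is correct and follows the paper's route. The paper says this proof is essentially identical to that of Lemma (\ref{lemma_self_classical_H}): reduce to $\alpha=\beta+1$, take $U'$ itself as the witness for the $H$ and $G$ clauses, and get the $F$ clause either directly or via Lemma (\ref{reln_Fs_and_Hs_Hjorth}), whose index typo you rightly read as $\beta$.
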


This gives us the following proposition, which seems to be the closest
we can get to a classical Karp/Fra\"iss\'e-style theorem in this
setting.
      
\begin{proposition}\label{prop_funs_agree_equiv_sim_Hjorth} Let $x$, $y \in X$, $U$, $V\in \mathcal T^*$ and $\alpha\in \On$.
  Then $H^\alpha_{x,U} = H^\alpha_{y,V}$ if and only if  $(x,U)\sim_\alpha (y,V)$, and for $0<\alpha$, if and only if
  Player $II$ has a winning strategy for ${\mathcal  G}^s(x,U,y,V,\alpha)$.
\end{proposition}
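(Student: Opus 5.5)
The plan is to prove the first equivalence, $H^\alpha_{x,U} = H^\alpha_{y,V}$ if and only if $(x,U)\sim_\alpha (y,V)$, by induction on $\alpha$. The argument follows the proof of Proposition (\ref{prop_funs_agree_equiv_sim_classical_FH}). The second equivalence, with the game ${\mathcal G}^s(x,U,y,V,\alpha)$ for $0<\alpha$, then follows by combining the first with Proposition (\ref{sim_iff_ws}). That proposition needs the hypothesis $(x,U)\sim_0(y,V)$. Both $H^\alpha$-equality and $\sim_\alpha$ imply $\sim_0$: the first because $H^0\subseteq H^\alpha$, the second by the monotonicity lemma for $\sim$. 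So the hypothesis is available in either direction, and the game statement is immediate once the first equivalence is in hand.

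\emph{Base case} $\alpha=0$. Here $H^0_{x,U}=I_{x,U}$ has domain $1$ and value $\obar{U\cdot x}$. Hence $H^0_{x,U}=H^0_{y,V}$ holds exactly when $\obar{U\cdot x}=\obar{V\cdot y}$, which is the definition of $(x,U)\sim_0(y,V)$.

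\emph{Limit case.} This is exactly as in the classical proof. Since $H^\alpha_{x,U}$ is the union of the $H^\beta_{x,U}$ for $\beta<\alpha$, equality at $\alpha$ is equality at all $\beta<\alpha$. By induction this is $\sim_\beta$ for all $\beta<\alpha$, which is the definition of $\sim_\alpha$.

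\emph{Successor case} $\alpha+1$. Here $H^{\alpha+1}_{x,U}=H^{\alpha+1}_{y,V}$ amounts to two things: $H^\alpha_{x,U}=H^\alpha_{y,V}$, which by induction is $(x,U)\sim_\alpha(y,V)$ (the first clause of $\sim_{\alpha+1}$), and agreement on all new arguments $(z,W,\alpha)$.

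For the forward direction, take $U'\pret U$. By Lemma (\ref{lemma_self_Hjorth}) we have $H^{\alpha+1}_{x,U}(x,U',\alpha)=1$, so also $H^{\alpha+1}_{y,V}(x,U',\alpha)=1$. This yields $V'\pret V$ with $H^\alpha_{y,V'}=H^\alpha_{x,U'}$, and by induction $(y,V')\sim_\alpha(x,U')$. The clause quantifying over $V'\pret V$ is symmetric.

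For the converse, suppose $(x,U)\sim_{\alpha+1}(y,V)$ and $H^{\alpha+1}_{x,U}(z,W,\alpha)=1$. Then there is some $U'\pret U$ with $H^\alpha_{x,U'}=H^\alpha_{z,W}$. The back-and-forth clause gives $V'\pret V$ with $(y,V')\sim_\alpha(x,U')$, hence $H^\alpha_{y,V'}=H^\alpha_{x,U'}=H^\alpha_{z,W}$ by induction, and so $H^{\alpha+1}_{y,V}(z,W,\alpha)=1$. The other direction is symmetric.

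The main obstacle I expect is bookkeeping, not ideas. First, the induction hypothesis must be stated for all pairs, since it is applied to shrunken sets $U',V'$. Second, the symmetry of $\sim_\alpha$ must be justified, because the induction produces $(y,V')\sim_\alpha(x,U')$ while the definition uses both orientations. The definition of $\sim_\alpha$ is symmetric in form, so a quick simultaneous induction shows $\sim_\alpha$ is symmetric.

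Lemma (\ref{lemma_self_Hjorth}) is stated for $0<\beta$. The needed instance $H^{\alpha+1}_{x,U}(x,U',\alpha)=1$ follows directly from the definition by taking the witness $U'$ itself, including when $\alpha=0$.
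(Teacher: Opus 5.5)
Your induction for $H^\alpha_{x,U}=H^\alpha_{y,V}\Leftrightarrow(x,U)\sim_\alpha(y,V)$ is correct and follows the paper's route: the proof of Proposition (\ref{prop_funs_agree_equiv_sim_classical_FH}) with a new base case, and the game clause read off from Proposition (\ref{sim_iff_ws}). You also correctly handle the explicit clause $(x,U)\sim_\alpha(y,V)$ inside $\sim_{\alpha+1}$ via $H^{\alpha+1}\supseteq H^\alpha$, which the paper's ``only the base case differs'' glosses over. The symmetry of $\sim_\alpha$ is not actually needed, since the induction hypothesis for all ordered pairs gives whichever orientation the definition asks for.

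There is one unjustified step in the game clause. Proposition (\ref{sim_iff_ws}) assumes $(x,U)\sim_0(y,V)$, and you obtain this only from $H^\alpha$-equality or from $\sim_\alpha$. In the direction ``II has a winning strategy for $\mathcal G^s(x,U,y,V,\alpha)$ $\Rightarrow$ $(x,U)\sim_\alpha(y,V)$'' neither is available, so you must extract $\sim_0$ from the strategy itself. This is easy because $\alpha>0$:
\begin{itemize}
\item Player I may open with $(x,U,0)$. II's legal reply $B_0\pret V$ must satisfy $\obar{B_0\cdot y}=\obar{U\cdot x}$, which gives $\obar{U\cdot x}\subseteq\obar{V\cdot y}$.
\item Player I may instead open with $(y,V,0)$, which is legal since $V\pret B_{-1}=V$. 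This forces $B_0\pret U$ with $\obar{B_0\cdot x}=\obar{V\cdot y}$, giving the reverse inclusion.
\end{itemize}
With this added your argument is complete; the paper passes over this point as well.
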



\end{document}